\documentclass[a4paper,10pt,english,french]{smfart}
\usepackage[frenchb,english]{babel}
\usepackage[T1]{fontenc}
\usepackage{lmodern}
\usepackage{graphicx}
\usepackage{amsmath,amsthm,amssymb,amsfonts}
\usepackage[a4paper,left=4cm,right=4cm,top=4cm,bottom=4cm]{geometry}
\numberwithin{equation}{section}
\title[Manin's conjecture for a singular quartic del Pezzo surface]{Manin's conjecture for a quartic del Pezzo surface with $\mathbf{A}_3$ singularity and four lines}
\author{Pierre Le Boudec}
\subjclass{$11$D$45$, $14$G$05$}
\keywords{Rational points, Manin's conjecture, del Pezzo surfaces, universal torsors}
\address{Université Denis Diderot (Paris VII) \\ Institut de Mathématiques de Jussieu \\ UMR 7586 \\ Case $7012$ - Bâtiment Chevaleret \\ Bureau $7$C$14$ \\ $75205$ Paris Cedex 13, France}
\email{pleboude@math.jussieu.fr}

\begin{document}

\makeatletter
\def\imod#1{\allowbreak\mkern10mu({\operator@font mod}\,\,#1)}
\makeatother

\newtheorem{lemma}{Lemma}
\newtheorem{theorem}{Theorem}
\newtheorem{corollaire}{Corollaire}
\newtheorem{proposition}{Proposition}

\newcommand{\vol}{\operatorname{vol}}
\newcommand{\D}{\mathrm{d}}
\newcommand{\rank}{\operatorname{rank}}
\newcommand{\Pic}{\operatorname{Pic}}
\newcommand{\Gal}{\operatorname{Gal}}
\newcommand{\meas}{\operatorname{meas}}
\newcommand{\Spec}{\operatorname{Spec}}

\begin{abstract}
We establish Manin's conjecture for a quartic del Pezzo surface split over $\mathbb{Q}$ and having a singularity of type $\mathbf{A}_3$ and containing exactly four lines. It is the first example of split singular quartic del Pezzo surface whose universal torsor is not a hypersurface for which Manin's conjecture is proved.
\end{abstract}

\maketitle

\tableofcontents

\section{Introduction}

Manin's conjecture (see \cite{MR974910}) gives a precise description of the distribution of rational points of bounded height on singular del Pezzo surfaces. More precisely, let $V \subset \mathbb{P}^n$ be such a surface defined over $\mathbb{Q}$ and anticanonically embedded and $U$ be the open subset formed by deleting the lines from $V$. We set
\begin{eqnarray*}
N_{U,H}(B) & = & \# \{x \in U(\mathbb{Q}), H(x) \leq B \} \textrm{,}
\end{eqnarray*}
where $H : \mathbb{P}^n(\mathbb{Q}) \to \mathbb{R}_{> 0}$ is the exponential height defined by
\begin{eqnarray*}
H(x_0: \dots :x_n) & = & \max \{ |x_i|, 0 \leq i \leq n \}\textrm{,}
\end{eqnarray*}
for $(x_0, \dots, x_n) \in \mathbb{Z}^{n+1}$ satisfying the condition $\gcd(x_0, \dots, x_n) = 1$. If $\widetilde{V}$ denotes the minimal desingularization of $V$ and $\rho = \rho_{\widetilde{V}}$ the rank of the Picard group of $\widetilde{V}$, then it is expected that
\begin{eqnarray*}
N_{U,H}(B) & = & c_{V,H} B \log(B)^{\rho -1} (1+o(1)) \textrm{,}
\end{eqnarray*}
where $c_{V,H}$ is a constant which is expected to follow Peyre's prediction \cite{MR1340296}. 

We are only interested here in singular del Pezzo surfaces of degree four. Their classification is rather classical and can be found in the work of Coray and Tsfasman \cite{MR940430}. Up to isomorphism over $\overline{\mathbb{Q}}$, there are fifteen types of such surfaces and they are categorized by their extended Dynkin diagrams which are the diagrams describing the intersection behaviour of the negative curves on the minimal desingularizations (see \cite[Table $4$]{D-hyper}). Here is a quick overview of the available results concerning Manin's conjecture for singular quartic del Pezzo surfaces split over $\mathbb{Q}$. The conjecture is already known to hold for nine surfaces of different types. Using harmonic analysis techniques on adelic groups and studying the height Zeta function
\begin{eqnarray*}
Z_{U,H}(s) & = & \sum_{x \in U(\mathbb{Q})} H(x)^{-s} \textrm{,}
\end{eqnarray*}
Batyrev and Tschinkel have proved it for toric varieties \cite{MR1620682} (which covers the three types $4 \mathbf{A}_1$,
$2 \mathbf{A}_1 + \mathbf{A}_2$ and $2 \mathbf{A}_1 + \mathbf{A}_3$) and Chambert-Loir and Tschinkel have proved it for equivariant compactifications of vector groups \cite{MR1906155} (which covers the type $\mathbf{D}_5$). Note that for a certain surface of type $\mathbf{D}_5$, la Bretèche and Browning have proved the conjecture independently \cite{MR2320172}. Finally, the conjecture has been obtained for five other surfaces, a surface of type $\mathbf{D}_4$ by Derenthal and Tschinkel \cite{MR2290499}, a surface of type $\mathbf{A}_1 + \mathbf{A}_3$ by Derenthal \cite{MR2520770}, a surface of type $\mathbf{A}_4$ by Browning and Derenthal \cite{MR2543667} and two surfaces of respective types $3 \mathbf{A}_1$ and $\mathbf{A}_1 + \mathbf{A}_2$ by the author \cite{3A1}. These proofs are very different from those using the fact that the varieties considered are equivariant compactifications of algebraic groups. They all use a lift to universal torsors. This consists in defining a bijection between the set of rational points to be counted on $U$ and a certain set of integral points on an affine variety of higher dimension (which is equal to eight for quartic surfaces). Note that Derenthal has determined the equations of the universal torsors for most of the singular quartic del Pezzo surfaces in his doctoral thesis \cite{Der-th}. This can also be achieved using only elementary techniques, see section \ref{torsor section} for an example.

Our aim is to prove Manin's conjecture for another surface split over $\mathbb{Q}$, having singularity type $\mathbf{A}_3$ and containing exactly four lines. This surface $V \subset \mathbb{P}^4$ is defined as the intersection of the two following quadrics,
\begin{eqnarray*}
x_0 x_1 - x_2^2 & = & 0 \textrm{,} \\
(x_0 + x_1 + x_3) x_3 - x_2 x_4 & = & 0 \textrm{.}
\end{eqnarray*}
The lines on $V$ are given by $x_i = x_2 = x_3 = 0$ and $x_i = x_2 = x_0 + x_1 + x_3 = 0$ for $i \in \{0,1\}$ and the unique singularity is $(0:0:0:0:1)$. We see that $V$ is actually split over $\mathbb{Q}$ and thus, if $\widetilde{V}$ denotes the minimal desingularization of $V$, the Picard group of $\widetilde{V}$ has rank $\rho = 6$. Define the open subset $U$ and the quantity $N_{U,H}(B)$ as explained above. In section \ref{torsor section}, we define a bijection between the set of the points to be counted on $U$ and a certain set of integral points of an open subset of the affine variety embedded in
$\mathbb{A}^{10} \simeq \Spec \left( \mathbb{Q}[\eta_1, \dots, \eta_7, \alpha_1, \alpha_2, \alpha_4] \right)$ and defined by
\begin{eqnarray*}
\eta_1^2\eta_2\eta_4^2\eta_7 + \eta_5 \alpha_1 - \eta_6 \alpha_2 & = & 0 \textrm{,} \\
\eta_2 \eta_3^2 \eta_5^2 \eta_6 + \eta_7 \alpha_2 - \eta_4 \alpha_4 & = & 0 \textrm{.}
\end{eqnarray*}
The universal torsor corresponding to our present problem actually has five equations and can be embedded in
$\mathbb{A}^{11} \simeq \Spec \left( \mathbb{Q}[\eta_1, \dots, \eta_7, \alpha_1, \alpha_2, \alpha_3, \alpha_4] \right)$ but we will neither use these three other equations nor the variable $\alpha_3$. Let us emphasize the fact that it is the first time that Manin's conjecture is proved for a split singular quartic del Pezzo surface whose universal torsor has several equations. This obstacle is overcome in section \ref{First steps} by turning the two equations into a single congruence in order to apply the usual techniques. Our result is the following.

\begin{theorem}
\label{Manin}
As $B$ tends to $+ \infty$, we have the estimate
\begin{eqnarray*}
N_{U,H}(B) & = & c_{V,H} B \log(B)^{5} \left( 1 + O \left( \frac1{\log(B)} \right) \right) \textrm{,}
\end{eqnarray*}
where $c_{V,H}$ agrees with Peyre's prediction.
\end{theorem}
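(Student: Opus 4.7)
My plan is to translate the counting problem to counting lattice points on the universal torsor via the bijection established in section \ref{torsor section}, and then to analyse the resulting sum in the style that has become classical for this type of problem (see for example \cite{MR2520770}, \cite{MR2543667} and \cite{3A1}): sum successively over the variables, extract the main term by an asymptotic expansion, and control the error terms through bounds on sums of multiplicative functions. The specific novelty here is that the torsor is not a hypersurface, which is handled by combining the two torsor equations into a single congruence as indicated in the introduction.

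\textbf{Reduction to a single congruence.} Solving the first torsor equation for $\alpha_1$ gives
\[
\eta_5 \alpha_1 = \eta_6 \alpha_2 - \eta_1^2 \eta_2 \eta_4^2 \eta_7,
\]
which determines $\alpha_1$ uniquely and imposes the divisibility $\eta_5 \mid \eta_6 \alpha_2 - \eta_1^2 \eta_2 \eta_4^2 \eta_7$. Likewise the second equation yields $\eta_4 \alpha_4 = \eta_7 \alpha_2 + \eta_2 \eta_3^2 \eta_5^2 \eta_6$, determining $\alpha_4$ and imposing $\eta_4 \mid \eta_7 \alpha_2 + \eta_2 \eta_3^2 \eta_5^2 \eta_6$. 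The coprimality conditions built into the torsor parametrisation (in particular $\gcd(\eta_4, \eta_5) = \gcd(\eta_5, \eta_6) = \gcd(\eta_4, \eta_7) = 1$) make each individual congruence solvable, and Chinese remaindering combines them into a single condition $\alpha_2 \equiv a(\eta_1, \dots, \eta_7) \pmod{\eta_4 \eta_5}$ for an explicit residue $a$. After this step, the problem looks formally like a hypersurface torsor count to which the usual machinery can be applied.

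\textbf{Summation.} I next sum over $\alpha_2$ in the interval prescribed by the height condition coming from section \ref{torsor section}, using $\#\{n \in I : n \equiv a \pmod{q}\} = |I|/q + O(1)$. The main term becomes a sevenfold sum over $(\eta_1, \dots, \eta_7)$ of a real volume divided by $\eta_4 \eta_5$, and the remaining coprimality conditions are disposed of by M\"obius inversion. I would then approximate the discrete sums by integrals successively (Abel summation or iterated integral comparison, as in \cite{3A1}), arriving at an explicit real integral whose evaluation produces the announced $c_{V,H} B \log(B)^5$ with an error of the claimed shape. The $O(1)$ terms arising from the innermost sum over $\alpha_2$ are controlled by estimating divisor-type sums over the $\eta_i$, which must be shown to yield a contribution of size at most $O(B \log(B)^4)$.

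\textbf{Matching Peyre's constant and main obstacle.} Finally I would compare the leading constant coming out of the calculation with Peyre's prediction for $\widetilde{V}$, by independently computing $\alpha(\widetilde{V})$, $\beta(\widetilde{V})$, the archimedean density and the $p$-adic densities from the explicit equations of $V$ and matching factor-by-factor. The main obstacle, as is typical for this style of argument, is the error analysis after the $\alpha_2$-summation: the $O(1)$ losses, once summed over seven variables with intertwined coprimality restrictions and a nontrivial height polytope, must collapse to save a whole power of $\log B$ against the main term; obtaining the sharp error $O(1/\log B)$ stated in the theorem, rather than a softer $o(1)$, will require a careful treatment of the boundary contributions in the inner integrals and of the secondary terms in their asymptotic expansions.
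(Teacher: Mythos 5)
Your plan follows essentially the same route as the paper: lift to the torsor of Lemma \ref{T}, solve the two equations for $\alpha_1$ and $\alpha_4$ so that they become congruences on $\alpha_2$ modulo $\eta_5$ and $\eta_4$ combined by the Chinese remainder theorem, count $\alpha_2$ in arithmetic progressions after M\"obius inversions, sum successively over the remaining variables, and match the resulting constant with Peyre's prediction. The steps you defer are precisely the technical content the paper supplies --- the averages of the multiplicative functions $\psi_{a,b}$, $\psi'_{a,b}$ with the separate treatment of the prime $2$ in the $\eta_7$- and $\eta_6$-summations, and the final lemma converting the five-fold sum over $\boldsymbol{\eta}$ into the polytope volume $\alpha = 2\alpha(\widetilde{V})$ times $\log(B)^5$ --- but the approach itself is the same.
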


Since $\rho = 6$, this estimate proves that $V$ satisfies Manin's conjecture. Let us note here that Derenthal has proved that $V$ is not toric \cite[Proposition 12]{D-hyper} and Derenthal and Loughran have proved that it is not an equivariant compactification of $\mathbb{G}_a^2$ \cite{DL-equi}, so theorem \ref{Manin} does not follow from the general results \cite{MR1620682} and \cite{MR1906155}. In view of this result, it only remains to deal with five types of split singular quartic del Pezzo surfaces among the list of fifteen.

In the following section, we prove several lemmas about summations of arithmetic functions. The next two sections are respectively devoted to the calculations of the universal torsor and of Peyre's constant. Finally, the last section is dedicated to the proof of theorem \ref{Manin}.

It is a great pleasure for the author to thank his supervisor Professor de la Bretèche both for his encouragement and his advice during this work.

This work has received the financial support of the ANR PEPR (Points Entiers Points Rationnels).

\section{Arithmetic functions}

We need to introduce the following collection of arithmetic functions,
\begin{eqnarray*}
\varphi^{\ast}(n) = \prod_{p|n} \left( 1 - \frac1{p} \right) \textrm{,} & \ &
\varphi^{\circ}(n) = \prod_{\substack{p|n \\ p \neq 2}} \left( 1 - \frac1{p-1} \right) \textrm{,} \\
\varphi^{\dag}(n) =  \prod_{p|n} \left( 1 - \frac1{p^2} \right) \textrm{,} & \ &
\varphi^{\flat}(n) = \prod_{\substack{p|n \\ p \neq 2}} \left( 1 + \frac1{p(p-2)} \right) \textrm{.}
\end{eqnarray*}
We can note here that if $n$ is odd then $\varphi^{\circ}(n) \varphi^{\flat}(n) = \varphi^{\ast}(n)$ and if $n$ is even then
$\varphi^{\circ}(n) \varphi^{\flat}(n) = 2 \varphi^{\ast}(n)$. Moreover, for $a,b \geq 1$, we define
\begin{eqnarray*}
\psi_{a,b}(n) & = &
\begin{cases}
\varphi^{\circ}(\gcd(a,n))^{-1} & \textrm{ if } \gcd(n,b) = 1 \textrm{,} \\
0 & \textrm{ otherwise,}
\end{cases}
\end{eqnarray*}
and
\begin{eqnarray*}
\psi_{a,b}'(n) & = &
\begin{cases}
\varphi^{\circ}(\gcd(a,n))^{-1} \varphi^{\ast}(n) \varphi^{\ast}(\gcd(a,n))^{-1} & \textrm{ if } \gcd(n,b) = 1 \textrm{,} \\
0 & \textrm{ otherwise.}
\end{cases}
\end{eqnarray*}
Finally, for $\delta > 0$, we set
\begin{eqnarray*}
\sigma_{- \delta}(n) & = & \sum_{k|n} k^{- \delta} \textrm{.}
\end{eqnarray*}

\begin{lemma}
\label{arithmetic preliminary 0}
Let $0 < \delta \leq 1$ be fixed. We have the estimate
\begin{eqnarray*}
\sum_{n \leq X} \psi_{a,b}(n)& = & \Psi(a,b) X + O_{\delta} \left( \sigma_{- \delta}(ab) X^{\delta} \right) \textrm{,}
\end{eqnarray*}
where
\begin{eqnarray*}
\Psi(a,b) & = & \varphi^{\ast}(b) \frac{\varphi^{\flat}(a)}{\varphi^{\flat}(\gcd(a,b))} \textrm{.}
\end{eqnarray*}
\end{lemma}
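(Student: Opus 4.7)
The plan is to express $\psi_{a,b}$ as the Dirichlet convolution $\mathbf{1} \ast f$ with $f = \mu \ast \psi_{a,b}$, swap the order of summation to obtain $\sum_{n \leq X} \psi_{a,b}(n) = \sum_d f(d) \lfloor X/d \rfloor$, and read off the main term from $X \sum_d f(d)/d$. Note that $\psi_{a,b}$ is multiplicative, since the coprimality indicator $\mathbf{1}_{\gcd(n,b)=1}$ is, and since $\gcd(a, n_1 n_2) = \gcd(a,n_1)\gcd(a,n_2)$ whenever $\gcd(n_1,n_2)=1$, combined with multiplicativity of $\varphi^\circ$, so too is $\varphi^\circ(\gcd(a,n))^{-1}$. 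A short local computation then gives $f(p^k) = 0$ for $k \geq 2$, while $f(p)$ equals $-1$ if $p \mid b$, equals $1/(p-2)$ if $p \mid a$, $p \nmid b$ and $p > 2$, and vanishes otherwise (in particular at $p = 2$ when $2 \nmid b$, since $\varphi^\circ$ ignores the prime $2$); thus $f$ is supported on squarefree divisors of $ab$.

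Identification of the main term is then immediate: the Dirichlet series
\begin{equation*}
\sum_d \frac{f(d)}{d^s} = \prod_{p \mid b}\left(1 - \frac{1}{p^s}\right) \prod_{\substack{p \mid a, p \nmid b \\ p > 2}}\left(1 + \frac{1}{(p-2) p^s}\right)
\end{equation*}
evaluated at $s = 1$ yields precisely $\varphi^\ast(b) \cdot \varphi^\flat(a)/\varphi^\flat(\gcd(a,b)) = \Psi(a,b)$. For the error, I write
\begin{equation*}
\sum_{n \leq X} \psi_{a,b}(n) - \Psi(a,b) X = - X \sum_{d > X} \frac{f(d)}{d} - \sum_{d \leq X} f(d) \{X/d\},
\end{equation*}
and apply the elementary inequalities $X/d \leq (X/d)^\delta$ (valid for $d > X$) and $1 \leq (X/d)^\delta$ (valid for $d \leq X$), both holding under the assumption $0 < \delta \leq 1$. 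Together they reduce both contributions to $O(X^\delta \sum_d |f(d)|/d^\delta)$.

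It remains to bound this sum by $O(\sigma_{-\delta}(ab))$, which is the only step demanding any attention. By multiplicativity of $|f|$ and the inequality $1/(p-2) \leq 1$ for $p \geq 3$, each local factor of $\sum_d |f(d)|/d^\delta$ is at most $1 + p^{-\delta}$, supported on $p \mid ab$, and the resulting product is trivially dominated by $\sigma_{-\delta}(ab) = \prod_{p^\alpha \parallel ab}(1 + p^{-\delta} + \cdots + p^{-\alpha \delta})$. Overall the proof uses only standard convolution machinery; the main thing requiring care throughout is the bookkeeping at the prime $p = 2$ (where $\varphi^\circ$ and $\varphi^\flat$ are trivial, so $2 \mid a$ contributes nothing to $f$, yet $2 \mid b$ still does), and ensuring that the bounds are uniform in $a$ and $b$ with all $a, b$-dependence absorbed into the factor $\sigma_{-\delta}(ab)$.
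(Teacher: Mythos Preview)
Your proof is correct and follows essentially the same route as the paper: compute $f=\psi_{a,b}\ast\mu$ at prime powers, observe that it is supported on squarefree divisors of $ab$ with $|f|\leq 1$, write $\sum_{n\leq X}\psi_{a,b}(n)=\sum_d f(d)\lfloor X/d\rfloor$, and extract the main term $\Psi(a,b)X$ with error $O_\delta(X^\delta\sigma_{-\delta}(ab))$. The only cosmetic difference is that the paper packages the error in one stroke via $[t]=t+O(t^\delta)$, whereas you split into $d\leq X$ and $d>X$; the content is identical.
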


\begin{proof}
We start by calculating the Dirichlet convolution of $\psi_{a,b}$ with the Möbius function $\mu$. We have
\begin{eqnarray*}
(\psi_{a,b} \ast \mu)(n)& = & \sum_{d|n} \psi_{a,b} \left( \frac{n}{d} \right) \mu(d) \\
& = & \prod_{p^\nu \parallel n} \left( \psi_{a,b} \left( p^\nu \right) - \psi_{a,b} \left( p^{\nu - 1} \right) \right) \textrm{.}
\end{eqnarray*}
Moreover $\psi_{a,b}(1) = 1$ and for all $\nu \geq 1$, we have
\begin{eqnarray*}
\psi_{a,b} \left( p^\nu \right) = \psi_{a,b}(p) =
\begin{cases}
\left( 1 - 1/(p-1) \right)^{-1} & \textrm{ if } p|a, p \neq 2 \textrm{ and } p \nmid b \textrm{,} \\
1 & \textrm{ if } p \neq 2, p \nmid ab \textrm{,} \\
1 & \textrm{ if } p = 2, 2 \nmid b \textrm{,} \\
0 & \textrm{ if } p|b \textrm{.}
\end{cases}
\end{eqnarray*}
Thus, we easily obtain
\begin{eqnarray*}
(\psi_{a,b} \ast \mu)(n) & = &
\mu(n) \prod_{p|\gcd(a,n), p\nmid b} \frac{-1}{p-2} \textrm{,}
\end{eqnarray*}
if $n|ab$ and $2 \nmid n$ or $2|b$ and $(\psi_{a,b} \ast \mu)(n) = 0$ otherwise. Writing $\psi_{a,b} = (\psi_{a,b} \ast \mu) \ast 1$, we get
\begin{eqnarray*}
\sum_{n \leq X} \psi_{a,b}(n) & = & \sum_{n \leq X} \sum_{d|n} (\psi_{a,b} \ast \mu)(d) \\
& = & \sum_{d = 1}^{+ \infty} (\psi_{a,b} \ast \mu)(d) \left[ \frac{X}{d} \right] \textrm{.}
\end{eqnarray*}
Let $0 < \delta \leq 1$ be fixed. Let us use the elementary estimate $[t] = t + O \left( t^{\delta} \right)$ for $t = X/d$. Since
$|(\psi_{a,b} \ast \mu)(n)| \leq 1$, we get
\begin{eqnarray*}
\sum_{d = 1}^{+ \infty} \frac{| (\psi_{a,b}  \ast \mu)(d) |}{d^{\delta}} & \leq & \sigma_{- \delta}(ab) \textrm{,}
\end{eqnarray*}
and we have thus proved that
\begin{eqnarray*}
\sum_{n \leq X} \psi_{a,b}(n) & = & X \sum_{d = 1}^{+ \infty} \frac{(\psi_{a,b} \ast \mu)(d)}{d}
+ O \left( \sigma_{- \delta}(ab) X^{\delta} \right) \textrm{.}
\end{eqnarray*}
Finally, a straigthforward calculation gives
\begin{eqnarray*}
\sum_{d = 1}^{+ \infty} \frac{(\psi_{a,b}  \ast \mu)(d)}{d} & = & \prod_{p|b} \left( 1 - \frac1{p} \right)
\prod_{\substack{p|a, p \nmid b \\ p \neq 2}} \left( 1 + \frac1{p(p-2)} \right) \textrm{,}
\end{eqnarray*}
which concludes the proof.
\end{proof}

\begin{lemma}
\label{arithmetic preliminary 0'}
Let $0 < \delta \leq 1$ be fixed. We have the estimate
\begin{eqnarray*}
\sum_{n \leq X} \psi_{a,b}'(n)& = & \Psi'(a,b) X + O_{\delta} \left( \sigma_{- \delta}(b) X^{\delta} \right) \textrm{,}
\end{eqnarray*}
where
\begin{eqnarray*}
\Psi'(a,b) & = & \varphi^{\ast}(b) \frac{\varphi^{\flat}(a)}{\varphi^{\flat}(\gcd(a,b))} \frac{\zeta(2)^{-1}}{\varphi^{\dag}(ab)} \textrm{.}
\end{eqnarray*}
\end{lemma}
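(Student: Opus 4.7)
The plan is to mimic the proof of Lemma~\ref{arithmetic preliminary 0}, the only new difficulties being the slightly more involved local factors and the sharper error term (which now involves only $\sigma_{-\delta}(b)$ rather than $\sigma_{-\delta}(ab)$). I would first compute $\psi'_{a,b}(p^\nu)$ prime-power by prime-power. Since $\gcd(a,p^\nu)$ stabilizes to $p^{\min(\nu_p(a),\nu)}$ and the three factors $\varphi^\circ(\gcd(a,p^\nu))^{-1}$, $\varphi^\ast(p^\nu)$, $\varphi^\ast(\gcd(a,p^\nu))^{-1}$ all depend on $\nu$ only through whether $p$ divides $a$ or not, one checks that $\psi'_{a,b}(p^\nu) = \psi'_{a,b}(p)$ for every $\nu \geq 1$. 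Hence $(\psi'_{a,b} \ast \mu)(p^\nu) = 0$ for $\nu \geq 2$, so $\psi'_{a,b} \ast \mu$ is supported on squarefree integers, and at such a $d$ equals $\prod_{p|d}(\psi'_{a,b}(p)-1)$. A quick case analysis gives the local values $\psi'_{a,b}(p) - 1 = -1$ if $p|b$; $1/(p-2)$ if $p|a$, $p \neq 2$, $p \nmid b$; $-1/p$ if $p \nmid ab$, $p \neq 2$; with analogous routine formulas at $p = 2$.

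Writing $\psi'_{a,b} = (\psi'_{a,b} \ast \mu) \ast 1$ and using $[X/d] = X/d + O((X/d)^\delta)$ as in the previous lemma splits the sum into a main term $X\sum_d (\psi'_{a,b} \ast \mu)(d)/d$ and an error $O(X^\delta\sum_d |(\psi'_{a,b} \ast \mu)(d)|/d^\delta)$. The critical point for the error estimate is that on squarefree support the sum of absolute values factors as an Euler product
\begin{eqnarray*}
\sum_{d=1}^{+\infty}\frac{|(\psi'_{a,b} \ast \mu)(d)|}{d^\delta} & \leq & \prod_{p|b}\left(1 + \frac1{p^\delta}\right)\prod_{\substack{p|a, p \nmid b \\ p \neq 2}}\left(1 + \frac1{(p-2)p^\delta}\right)\prod_{p \nmid ab}\left(1 + \frac1{p^{1+\delta}}\right)\textrm{,}
\end{eqnarray*}
and the last two products are bounded by an absolute constant uniform in $a,b$ (for the middle one this uses $1/(p-2) \ll 1/p$ for $p \geq 5$, together with a fixed contribution at $p=3$), while the first is at most $\sigma_{-\delta}(b)$. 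This recovers the advertised error term.

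It then remains to identify the constant $\sum_d (\psi'_{a,b} \ast \mu)(d)/d$, which by multiplicativity equals the Euler product $\prod_p (1 + (\psi'_{a,b}(p)-1)/p)$, with the closed form $\Psi'(a,b)$. The latter also admits an Euler product, obtained by expanding $\zeta(2)^{-1}/\varphi^\dag(ab) = \prod_{p \nmid ab}(1 - 1/p^2)$ and unfolding the definitions of $\varphi^\ast(b)$ and $\varphi^\flat(a)/\varphi^\flat(\gcd(a,b))$. Comparing local factors prime by prime finishes the proof: for $p \mid b$ both products give $1 - 1/p$; for $p | a$, $p \nmid b$, $p \neq 2$ both give $1 + 1/(p(p-2))$; for $p \nmid ab$, $p \neq 2$ both give $1 - 1/p^2$. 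The only nontrivial bookkeeping is at the prime $p = 2$, where $\varphi^\circ$ is by convention trivial and the three subcases $2|b$, $2|a$ with $2 \nmid b$, and $2 \nmid ab$ must each be verified by hand; I expect this to be the main, though entirely routine, obstacle.
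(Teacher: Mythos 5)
Your proposal is correct and follows essentially the same route as the paper: compute the Dirichlet convolution $\psi'_{a,b}\ast\mu$ (multiplicative, supported on squarefree integers), write $\sum_{n\leq X}\psi'_{a,b}(n)=\sum_d(\psi'_{a,b}\ast\mu)(d)[X/d]$ with $[t]=t+O(t^{\delta})$, bound $\sum_d|(\psi'_{a,b}\ast\mu)(d)|d^{-\delta}\ll_{\delta}\sigma_{-\delta}(b)$, and identify the constant with $\Psi'(a,b)$ by comparing Euler factors (including the three cases at $p=2$, which check out as you expect). The only cosmetic difference is that the paper bounds the convolution by $|(\psi'_{a,b}\ast\mu)(n)|\ll\gcd(b,n)/n$ rather than via your Euler-product factorization, which gives the same error term.
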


\begin{proof}
We proceed exactly as for the proof of lemma \ref{arithmetic preliminary 0}. Let
\begin{eqnarray*}
f(n) & = & \mu(n) \prod_{\substack{p|n, p\nmid ab \\ p \neq 2}} \frac1{p}
\prod_{\substack{p|\gcd(a,n), p\nmid b \\ p \neq 2}} \frac{-1}{p-2} \textrm{.}
\end{eqnarray*}
A calculation provides
\begin{eqnarray*}
(\psi_{a,b}' \ast \mu)(n) & = &
\begin{cases}
f(n) & \textrm{ if } 2 \nmid n \textrm{ or } 2|b \textrm{,} \\
f(n)/2 & \textrm{ if } 2|n \textrm{ and } 2 \nmid ab \textrm{,} \\
0 & \textrm{ otherwise.}
\end{cases}
\end{eqnarray*}
Now we see that $| (\psi_{a,b}' \ast \mu)(n) | \ll \gcd(b,n)/n$, which easily yields
\begin{eqnarray*}
\sum_{d = 1}^{+ \infty} \frac{| (\psi_{a,b}'  \ast \mu)(d) |}{d^{\delta}} & \ll & \sigma_{- \delta}(b) \textrm{.}
\end{eqnarray*}
Another straightforward calculation gives
\begin{eqnarray*}
\sum_{d = 1}^{+ \infty} \frac{(\psi_{a,b}' \ast \mu)(d)}{d} & = & \Psi'(a,b) \textrm{,}
\end{eqnarray*}
which completes the proof.
\end{proof}

Using partial summation and the estimates of lemmas \ref{arithmetic preliminary 0} and \ref{arithmetic preliminary 0'} as in the proof of \cite[Lemma $6$]{3A1}, we see that we have the following result.

\begin{lemma}
\label{arithmetic preliminary}
Let $0 < \delta \leq 1$ be fixed. Let $0 \leq t_1 < t_2$ and $I=[t_1,t_2]$. Let also $g : \mathbb{R}_{> 0} \to \mathbb{R}$ be a function having a piecewise continuous derivative on $I$ whose sign changes at most $R_g(I)$ times on $I$. We have
\begin{eqnarray*}
\sum_{n \in I \cap \mathbb{Z}_{>0}} \psi_{a,b}(n) g(n) & = & \Psi(a,b) \int_I g(t) \D t +
O_{\delta} \left( \sigma_{- \delta}(ab) t_2^{\delta} M_I(g) \right) \textrm{,}
\end{eqnarray*}
and 
\begin{eqnarray*}
\sum_{n \in I \cap \mathbb{Z}_{>0}} \psi_{a,b}'(n) g(n) & = & \Psi'(a,b) \int_I g(t) \D t +
O_{\delta} \left( \sigma_{- \delta}(b) t_2^{\delta} M_I(g) \right) \textrm{,}
\end{eqnarray*}
where $M_I(g) = (1 + R_g(I)) \sup_{t \in I \cap \mathbb{R}_{> 0}} |g(t)|$.
\end{lemma}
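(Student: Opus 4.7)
The plan is to apply Abel (partial) summation to transfer the asymptotics for the unweighted sums $\sum_{n\le X}\psi_{a,b}(n)$ and $\sum_{n\le X}\psi_{a,b}'(n)$ obtained in lemmas \ref{arithmetic preliminary 0} and \ref{arithmetic preliminary 0'} to the weighted sums against $g$. Since both estimates have the same shape, it is enough to carry out the argument once, say for $\psi_{a,b}$; the version with $\psi_{a,b}'$ is obtained by substituting $\Psi'$ for $\Psi$ and $\sigma_{-\delta}(b)$ for $\sigma_{-\delta}(ab)$.

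Set $S(t)=\sum_{n\le t}\psi_{a,b}(n)$ and write, by lemma \ref{arithmetic preliminary 0},
\begin{eqnarray*}
S(t) & = & \Psi(a,b)\,t+E(t), \qquad |E(t)|\ll_{\delta}\sigma_{-\delta}(ab)\,t^{\delta}.
\end{eqnarray*}
Assume first that $g$ is continuously differentiable on $I=[t_1,t_2]$. Abel summation gives
\begin{eqnarray*}
\sum_{n\in I\cap\mathbb{Z}_{>0}}\psi_{a,b}(n)g(n) & = & S(t_2)g(t_2)-S(t_1)g(t_1)-\int_{t_1}^{t_2}S(t)g'(t)\,\D t.
\end{eqnarray*}
Plugging in the decomposition of $S$, the contribution of the main term $\Psi(a,b)t$ collapses, via integration by parts in reverse, to $\Psi(a,b)\int_I g(t)\,\D t$, which is exactly the expected main term.

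It remains to bound the contribution of $E$. The boundary terms are at once $\ll\sigma_{-\delta}(ab)t_2^{\delta}\sup_I|g|$. For the integral, the hypothesis that $g'$ changes sign at most $R_g(I)$ times on $I$ allows us to split $I$ into at most $R_g(I)+1$ subintervals on which $g'$ has constant sign. On each such subinterval $[u,v]$ one has $\int_u^v|g'(t)|\,\D t=|g(v)-g(u)|\le 2\sup_I|g|$, whence
\begin{eqnarray*}
\int_{t_1}^{t_2}|E(t)||g'(t)|\,\D t & \ll_{\delta} & \sigma_{-\delta}(ab)\,t_2^{\delta}\,(1+R_g(I))\sup_I|g|,
\end{eqnarray*}
which is precisely $\sigma_{-\delta}(ab)t_2^{\delta}M_I(g)$. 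The piecewise $C^1$ case is handled by summing these estimates over the finitely many smooth pieces of $g$, the jump terms being absorbed into the same bound. This yields the first estimate; the second follows verbatim from lemma \ref{arithmetic preliminary 0'}.

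The argument is entirely routine; the only step that requires a little care is the bookkeeping of the sign changes of $g'$ in order to convert $\int_I|g'|$ into a multiple of $\sup_I|g|$, which is where the factor $1+R_g(I)$ in the definition of $M_I(g)$ enters.
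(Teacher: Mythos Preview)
Your argument is correct and follows exactly the route the paper indicates: partial summation combined with the counting estimates of lemmas \ref{arithmetic preliminary 0} and \ref{arithmetic preliminary 0'}, with the sign-change hypothesis on $g'$ used to bound $\int_I|g'|$ by a constant multiple of $(1+R_g(I))\sup_I|g|$. The paper merely refers to \cite[Lemma~$6$]{3A1} for the details you have supplied, so nothing further is needed.
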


We also have the following estimation.

\begin{lemma}
\label{arithmetic preliminary 2}
With the same notations, if $2 \nmid b$ then
\begin{eqnarray*}
\sum_{\substack{n \in I \cap \mathbb{Z}_{>0} \\ n \equiv 0 \imod{2}}} \psi_{a,b}(n) g(n) & = &
\frac1{2} \Psi(a,b) \int_I g(t) \D t + O_{\delta} \left( \sigma_{- \delta}(ab) t_2^{\delta} M_I(g) \right) \textrm{.}
\end{eqnarray*}
In a similar way, if $2|a$ and $2 \nmid b$ then
\begin{eqnarray*}
\sum_{\substack{n \in I \cap \mathbb{Z}_{>0} \\ n \equiv 0 \imod{2}}} \psi_{a,b}'(n) g(n) & = &
\frac1{2} \Psi'(a,b) \int_I g(t) \D t + O_{\delta} \left( \sigma_{- \delta}(b) t_2^{\delta} M_I(g) \right) \textrm{.}
\end{eqnarray*}
\end{lemma}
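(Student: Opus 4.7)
The plan is to reduce each even-indexed sum to a sum of the same shape as in Lemma~\ref{arithmetic preliminary} by the change of variable $n=2m$, after showing that the functions $\psi_{a,b}$ and $\psi_{a,b}'$ are unchanged by this doubling under the stated hypotheses.

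For the first identity, the key observation is that when $2\nmid b$ one has $\psi_{a,b}(2m)=\psi_{a,b}(m)$ for every $m\geq 1$. Indeed, $\gcd(2m,b)=\gcd(m,b)$ because $b$ is odd, so the support condition is preserved; and $\varphi^{\circ}$ by definition only involves odd primes, hence $\varphi^{\circ}(\gcd(a,2m))=\varphi^{\circ}(\gcd(a,m))$ irrespective of the parities of $a$ and $m$. Writing $n=2m$ and applying Lemma~\ref{arithmetic preliminary} on the interval $I/2=[t_1/2,t_2/2]$ to the function $h(t)=g(2t)$, whose derivative has the same sign changes as $g$ and the same supremum, gives
\begin{eqnarray*}
\sum_{\substack{n\in I\cap\mathbb{Z}_{>0}\\ n\equiv 0\imod 2}}\psi_{a,b}(n)g(n)
&=&\sum_{m\in(I/2)\cap\mathbb{Z}_{>0}}\psi_{a,b}(m)g(2m)\\
&=&\Psi(a,b)\int_{I/2}g(2t)\,\D t+O_{\delta}\!\left(\sigma_{-\delta}(ab)t_2^{\delta}M_I(g)\right),
\end{eqnarray*}
and the change of variable $u=2t$ in the main term produces the factor $1/2$.

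For the second identity, the analogous claim is $\psi_{a,b}'(2m)=\psi_{a,b}'(m)$ whenever $2\mid a$ and $2\nmid b$, and I would verify it by splitting on the parity of $m$. The $\varphi^{\circ}$ factor is handled as above. For the $\varphi^{\ast}$ factors, if $m$ is odd then $\varphi^{\ast}(2m)=\tfrac12\varphi^{\ast}(m)$, while the hypothesis $2\mid a$ together with $v_2(2m)=1$ forces $\varphi^{\ast}(\gcd(a,2m))=\tfrac12\varphi^{\ast}(\gcd(a,m))$, so the two factors of $1/2$ cancel; if $m$ is even, both $\varphi^{\ast}(2m)=\varphi^{\ast}(m)$ and $\varphi^{\ast}(\gcd(a,2m))=\varphi^{\ast}(\gcd(a,m))$ since the sets of prime divisors are unchanged. (Note that the extra assumption $2\mid a$ is exactly what is needed to make this cancellation work: without it, the ratio $\psi_{a,b}'(2m)/\psi_{a,b}'(m)$ would depend on the parity of $m$.) The rest of the argument is identical: substitute $n=2m$, apply the second estimate of Lemma~\ref{arithmetic preliminary} on $I/2$ with $h(t)=g(2t)$, and change variables in the main term.

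The only mildly delicate step is the verification of the functional equation $\psi_{a,b}'(2m)=\psi_{a,b}'(m)$, since $\varphi^{\ast}$ is sensitive to the prime $2$; once this is established, everything else is a bookkeeping exercise using the estimates already proved.
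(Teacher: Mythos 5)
Your proof is correct, and it takes a slightly different route from the paper. You establish the pointwise identities $\psi_{a,b}(2m)=\psi_{a,b}(m)$ (using only $2\nmid b$, since $\varphi^{\circ}$ ignores the prime $2$) and $\psi_{a,b}'(2m)=\psi_{a,b}'(m)$ (where $2\mid a$ is exactly what makes the two factors of $1/2$ cancel, as you verify by the parity split on $m$), and then reduce the even-indexed sum to Lemma \ref{arithmetic preliminary} applied to the interval $I/2$ and the function $t\mapsto g(2t)$, the factor $1/2$ coming from the change of variable in the main term; the bookkeeping of $M_{I/2}(g(2\cdot))=M_I(g)$ and $(t_2/2)^{\delta}\leq t_2^{\delta}$ is fine. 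The paper instead works at the level of the Dirichlet convolution: it writes the even-$n$ sum as $\sum_d(\psi_{a,b}\ast\mu)(d)\#\{k\leq X/d,\ dk\equiv 0\imod 2\}$, observes that $(\psi_{a,b}\ast\mu)(d)=0$ for even $d$ when $2\nmid b$ (respectively $(\psi_{a,b}'\ast\mu)(d)=0$ when $2\mid a$, $2\nmid b$), so only even $k$ contribute and the count $X/(2d)+O((X/d)^{\delta})$ produces the factor $1/2$, and then finishes by partial summation as in Lemma \ref{arithmetic preliminary}. The two arguments rest on the same arithmetic fact about the prime $2$ (your functional equation is equivalent to the vanishing of the convolution at even arguments), but your version has the advantage of using Lemma \ref{arithmetic preliminary} as a black box without reopening the convolution computation, while the paper's version stays uniform with the proofs of Lemmas \ref{arithmetic preliminary 0} and \ref{arithmetic preliminary 0'} and avoids introducing the rescaled interval and function.
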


\begin{proof}
Let us prove the statement for $\psi_{a,b}$, it suffices to notice that
\begin{eqnarray*}
\sum_{\substack{n \leq X \\ n \equiv 0 \imod{2}}} \psi_{a,b}(n) & = &
\sum_{d = 1}^{+ \infty} (\psi_{a,b} \ast \mu)(d) \sum_{\substack{k \leq X/d \\ k \equiv 0 \imod{2}}} 1 \\
& & + \sum_{\substack{d = 1 \\ d \equiv 0 \imod{2}}}^{+ \infty} (\psi_{a,b} \ast \mu)(d)
\sum_{\substack{k \leq X/d \\ k \equiv 1 \imod{2}}} 1 \textrm{,}
\end{eqnarray*}
and $(\psi_{a,b} \ast \mu)(d) = 0$ for all $d \equiv 0 \imod{2}$ since $2 \nmid b$ and therefore
\begin{eqnarray*}
\sum_{\substack{n \leq X \\ n \equiv 0 \imod{2}}} \psi_{a,b}(n) & = & \sum_{d = 1}^{+ \infty} (\psi_{a,b} \ast \mu)(d) \left( \frac{X}{2 d}
+ O \left( \frac{X^{\delta}}{d^{\delta}} \right) \right) \textrm{.}
\end{eqnarray*}
We can conclude exactly as in the proof of lemma \ref{arithmetic preliminary 0} and finally, as for lemma
\ref{arithmetic preliminary}, use partial summation to complete the proof. The proof for $\psi_{a,b}'$ is strictly identical, it only uses the fact that $(\psi_{a,b}' \ast \mu)(d) = 0$ for all $d \equiv 0 \imod{2}$ since $2|a$ and $2 \nmid b$.
\end{proof}

\section{The universal torsor}

\label{torsor section}

We now proceed to define a bijection between the set of rational points we want to count on $U$ and a certain set of integral points on the affine variety defined in the introduction. As explained in the introduction, the universal torsor of our problem is an open subset of an affine variety of dimension $8$ embedded in $\mathbb{A}^{11}$. It has five equations but we will only deal with ten of the eleven variables and will only make use of two equations among these five. Our choice of notation might be surprising but it is guided by our wish to adopt the notation used by Derenthal in \cite[Chapter $6$]{Der-th}. Note that if $(x_0:x_1:x_2:x_3:x_4) \in V(\mathbb{Q})$ then we have $(x_0:x_1:x_2:x_3:x_4) \in U(\mathbb{Q})$ if and only if $x_0x_1x_2x_3 \neq 0$. Let
$(x_0,x_1,x_2,x_3,x_4) \in \mathbb{Z}_{\neq 0}^4 \times \mathbb{Z}$ be such that
\begin{eqnarray*}
x_0 x_1 - x_2^2 & = & 0 \textrm{,} \\
(x_0 + x_1 + x_3) x_3 - x_2 x_4 & = & 0 \textrm{,}
\end{eqnarray*}
and $\max \{ |x_i|, 0 \leq i \leq 4 \} \leq B$ and $\gcd(x_0,x_1,x_2,x_3,x_4) = 1$. Since $\mathbf{x} = - \mathbf{x}$ in $\mathbb{P}^4$, we can assume that $x_0 > 0$, which implies $x_1 > 0$. Moreover, the symmetry given by $(x_2,x_4) \mapsto (-x_2,-x_4)$ shows that we can also assume that $x_2 > 0$ keeping in mind that we need to multiply our future result by $2$. The first equation shows that there is a unique way to write $x_0 = y_{01} x_0'^2$, $x_1 = y_{01} x_1'^2$ and $x_2 = y_{01} x_0' x_1'$ for some $x_0',x_1',y_{01} > 0$ such that $\gcd(x_0',x_1') = 1$. The second equation therefore gives
\begin{eqnarray*}
\left( y_{01} x_0'^2+ y_{01} x_1'^2 + x_3 \right) x_3 - y_{01} x_0' x_1' x_4 & = & 0 \textrm{.}
\end{eqnarray*}
We define $y_{01}' = \gcd(y_{01}, x_3) > 0$ and write $y_{01} = y_{01}' \eta_2$ and $x_3 =  y_{01}' x_3'$ with $\eta_2>0$ and $\gcd(\eta_2, x_3') = 1$. We obtain
\begin{eqnarray*}
\left( \eta_2 x_0'^2+ \eta_2 x_1'^2 + x_3' \right) y_{01}' x_3' - \eta_2 x_0' x_1' x_4 & = & 0 \textrm{,}
\end{eqnarray*}
and thus $\eta_2|y_{01}' x_3'^2$ and it follows $\eta_2|y_{01}'$ since $\gcd(\eta_2,x_3') = 1$. We can therefore write $y_{01}' = \eta_2 y_{01}''$ for some $y_{01}'' > 0$. The equation becomes
\begin{eqnarray*}
\left( \eta_2 x_0'^2+ \eta_2 x_1'^2 + x_3' \right) y_{01}'' x_3' - x_0' x_1' x_4 & = & 0 \textrm{.}
\end{eqnarray*}
We now see that $\gcd(x_0,x_1,x_2,x_3,x_4) = 1$ implies $\gcd(y_{01}'',x_4) = 1$ and thus $y_{01}''|x_0' x_1'$ and $x_0'$, $x_1'$ being coprime, we can write $y_{01}'' = \eta_1 \eta_3$, $x_0' = \eta_3 x_0''$ and $x_1' = \eta_1 x_1''$ for some
$\eta_1, \eta_3, x_0'', x_1'' > 0$. Now we set
$x_3' = \alpha_1 x_3''$, $x_4 = \alpha_1 \alpha_4$ with $x_3'' > 0$ and $\gcd(x_3'',\alpha_4) = 1$ (we do not prescribe the sign of
$\alpha_1=\pm \gcd(x_3',x_4)$). We finally get
\begin{eqnarray*}
\left( \eta_2 \eta_3^2 x_0''^2 + \eta_2 \eta_1^2 x_1''^2 + \alpha_1 x_3'' \right) x_3'' - x_0'' x_1'' \alpha_4 & = & 0 \textrm{.}
\end{eqnarray*}
We observe that since $\gcd(x_3'',\alpha_4) = 1$, we have $x_3''|x_0''x_1''$ and we can write $x_3'' = \eta_5 \eta_7$, $x_0'' = \eta_5 \eta_6$ and $x_1'' = \eta_4 \eta_7$, for some $\eta_4,\eta_5,\eta_6,\eta_7 > 0$. We have finally obtained
\begin{eqnarray*}
x_0 & = & \eta_1 \eta_2^2 \eta_3^3 \eta_5^2 \eta_6^2 \textrm{,} \\
x_1 & = & \eta_1^3 \eta_2^2 \eta_3 \eta_4^2 \eta_7^2 \textrm{,} \\
x_2 & = & \eta_1^2 \eta_2^2 \eta_3^2 \eta_4 \eta_5 \eta_6 \eta_7 \textrm{,} \\
x_3 & = & \eta_1 \eta_2 \eta_3 \eta_5 \eta_7 \alpha_1 \textrm{,} \\
x_4 & = & \alpha_1 \alpha_4 \textrm{,}
\end{eqnarray*}
and the equation is
\begin{eqnarray*}
\eta_2 \eta_3^2 \eta_5^2 \eta_6^2 + \eta_1^2 \eta_2 \eta_4^2 \eta_7^2 + \eta_5 \eta_7 \alpha_1 -
\eta_4 \eta_6 \alpha_4 & = & 0 \textrm{.}
\end{eqnarray*}
Furthermore, it is easy to see that the coprimality conditions can be summed up by
\begin{eqnarray}
\label{coprim1}
& & \gcd(\eta_3\eta_5\eta_6,\eta_1\eta_4\eta_7) = 1 \textrm{,} \\
\label{coprim2}
& & \gcd(\eta_5\eta_7,\eta_2\alpha_4) = 1 \textrm{,} \\
\label{coprim3}
& & \gcd(\eta_1\eta_2\eta_3, \alpha_1\alpha_4) = 1 \textrm{.}
\end{eqnarray}
Since $\eta_6$ and $\eta_7$ are coprime, we see that the equation is equivalent to the existence of
$\alpha_2 \in \mathbb{Z}$ such that
\begin{eqnarray}
\label{torsor 1}
\eta_1^2 \eta_2 \eta_4^2 \eta_7 + \eta_5 \alpha_1 - \eta_6 \alpha_2 & = & 0 \textrm{,} \\
\label{torsor 2}
\eta_2 \eta_3^2 \eta_5^2 \eta_6 + \eta_7 \alpha_2 - \eta_4 \alpha_4 & = & 0 \textrm{.}
\end{eqnarray}
In a similar way, since $\eta_4$ and $\eta_5$ are coprime, we can derive the existence of $\alpha_3 \in \mathbb{Z}$ such that
\begin{eqnarray*}
\eta_2 \eta_3^2 \eta_5 \eta_6^2 + \eta_7 \alpha_1 - \eta_4 \alpha_3 & = & 0 \textrm{,} \\
\eta_1^2 \eta_2 \eta_4 \eta_7^2 + \eta_5 \alpha_3 - \eta_6 \alpha_4 & = & 0 \textrm{,} \\
\eta_1^2 \eta_2^2 \eta_3^2 \eta_4 \eta_5 \eta_6 \eta_7 + \alpha_1 \alpha_4 - \alpha_2 \alpha_3 & = & 0 \textrm{.}
\end{eqnarray*}
As explained above, we will not use these three equations. We define $\mathcal{T}(B)$ as the set of
$(\eta_1,\eta_2,\eta_3,\eta_4,\eta_5,\eta_6,\eta_7,\alpha_1,\alpha_2,\alpha_4) \in \mathbb{Z}_{>0}^7 \times \mathbb{Z}^3$ satisfying the coprimality conditions \eqref{coprim1}, \eqref{coprim2}, \eqref{coprim3}, the two equations \eqref{torsor 1} and \eqref{torsor 2} and finally the height conditions
\begin{eqnarray}
\label{condition1}
\eta_1 \eta_2^2 \eta_3^3 \eta_5^2 \eta_6^2 & \leq & B \textrm{,} \\
\label{condition2}
\eta_1^3 \eta_2^2 \eta_3 \eta_4^2 \eta_7^2 & \leq & B \textrm{,} \\
\label{condition3}
\eta_1 \eta_2 \eta_3 \eta_5 \eta_7 |\alpha_1| & \leq & B \textrm{,} \\
\label{condition4}
|\alpha_1 \alpha_4| & \leq & B \textrm{.}
\end{eqnarray}
We have proved the following lemma.

\begin{lemma}
\label{T}
We have the equality
\begin{eqnarray*}
N_{U,H}(B) & = & 2 \# \mathcal{T}(B) \textrm{.}
\end{eqnarray*}
\end{lemma}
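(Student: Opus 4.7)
The bijection has essentially been constructed in the text preceding the lemma; what remains is to package the descent into a clean proof of both directions and bookkeep the factor of $2$.

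My plan is to start on the rational-points side. Given $(x_0:x_1:x_2:x_3:x_4) \in U(\mathbb{Q})$, choose the unique integer representative with $\gcd(x_0,\ldots,x_4)=1$, then use $\mathbf{x} = -\mathbf{x}$ in $\mathbb{P}^4$ to normalize $x_0 > 0$ (the first quadric then forces $x_1 > 0$). The involution $(x_2,x_4) \mapsto (-x_2,-x_4)$ preserves both quadrics and the height, so it identifies two distinct integral representatives with $x_0,x_1>0$ unless $x_2=0$ or $x_4=0$; on $U$ we have $x_2 \neq 0$, so this symmetry is free and accounts for the factor $2$ provided we then restrict to $x_2 > 0$. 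I would note the (trivial) fact that the $x_4=0$ case, which a priori could create issues, poses none since we only quotient by sign of $x_2$.

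Next I would carry out the forward map exactly as in the body of the section, but emphasizing uniqueness at each step: the factorization $x_0 = y_{01} x_0'^2$, $x_1 = y_{01} x_1'^2$, $x_2 = y_{01} x_0' x_1'$ with $x_0',x_1',y_{01} > 0$ and $\gcd(x_0',x_1')=1$ is unique because it amounts to writing $y_{01} = \gcd(x_0,x_1)$; the successive extractions $y_{01}' = \gcd(y_{01},x_3)$, $y_{01}'' = y_{01}'/\eta_2$, the splittings $y_{01}'' = \eta_1\eta_3$, $x_3' = \alpha_1 x_3''$, $x_3'' = \eta_5\eta_7$, $x_0'' = \eta_5\eta_6$, $x_1'' = \eta_4\eta_7$ are each uniquely determined by the stated coprimality constraints, yielding positive $\eta_1,\dots,\eta_7$ and $\alpha_1,\alpha_2,\alpha_4 \in \mathbb{Z}$. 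I would verify that the collected coprimality constraints are exactly (\ref{coprim1}), (\ref{coprim2}), (\ref{coprim3}) by tracing back each step (the upshot being: coprimality of the original quintuple is recorded piecewise through the descent). The height bounds (\ref{condition1})--(\ref{condition4}) come directly from $|x_i| \leq B$ after substituting the monomials for $x_0,x_1,x_3,x_4$; note $|x_2| \leq B$ is automatic from $x_2^2 = x_0 x_1 \leq B^2$. The introduction of $\alpha_2$ (satisfying (\ref{torsor 1})--(\ref{torsor 2})) is the step where the single equation is split: from
\[
\eta_2\eta_3^2\eta_5^2\eta_6^2 + \eta_1^2\eta_2\eta_4^2\eta_7^2 + \eta_5\eta_7\alpha_1 - \eta_4\eta_6\alpha_4 = 0,
\]
the coprimality $\gcd(\eta_6,\eta_7)=1$ (which follows from (\ref{coprim1})) forces $\eta_6 \mid \eta_1^2\eta_2\eta_4^2\eta_7 + \eta_5\alpha_1$, defining $\alpha_2$ uniquely by (\ref{torsor 1}); relation (\ref{torsor 2}) then follows by substitution.

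For the inverse map, given $(\eta_1,\dots,\eta_7,\alpha_1,\alpha_2,\alpha_4) \in \mathcal{T}(B)$ I would define $(x_0,\dots,x_4)$ by the displayed monomial formulas and check: (i) the two quadrics vanish, which reduces to eliminating $\alpha_2$ between (\ref{torsor 1}) and (\ref{torsor 2}) to recover the original equation, and then multiplying through; (ii) the global gcd is $1$, which is a prime-by-prime verification using (\ref{coprim1})--(\ref{coprim3}); (iii) $x_0x_1x_2x_3 \neq 0$ (so the point lies on $U$), immediate from positivity of the $\eta_i$ and the height conditions; (iv) the height is at most $B$, exactly (\ref{condition1})--(\ref{condition4}). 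The main (minor) obstacle is the bookkeeping of coprimality: I would organize it as a table, checking that for each prime $p$, the possible divisibilities among the $\eta_i$ and $\alpha_j$ allowed by (\ref{coprim1})--(\ref{coprim3}) translate back into $\gcd(x_0,\ldots,x_4)=1$. Once both maps are shown to be mutually inverse, Lemma \ref{T} follows since we have produced a $2$-to-$1$ correspondence from integral representatives $(x_0,x_1,x_2,x_3,x_4)$ with $x_0,x_1>0$ and $\gcd=1$ onto $\mathcal{T}(B)$, and each such equivalence class of representatives corresponds to exactly one point of $U(\mathbb{Q})$.
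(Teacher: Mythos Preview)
Your proposal is correct and follows exactly the paper's approach: the section preceding the lemma carries out this same chain of factorizations, and the paper then simply states ``We have proved the following lemma'' without spelling out the inverse direction you sketch. The only addition in your write-up is the explicit verification that the two maps are mutually inverse, which the paper leaves to the reader.
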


\section{Calculation of Peyre's constant}

We calculate the value of the constant $c_{V,H}$ predicted by Peyre. It is defined by
\begin{eqnarray*}
c_{V,H} & = & \alpha(\widetilde{V}) \beta(\widetilde{V}) \omega_H(\widetilde{V}) \textrm{,}
\end{eqnarray*}
where $\alpha(\widetilde{V}) \in \mathbb{Q}$ is the volume of a certain polytope in the dual of the effective cone of $\widetilde{V}$ with respect to the intersection form,
$\beta(\widetilde{V}) = \# H^1(\Gal(\overline{\mathbb{Q}}/\mathbb{Q}), \Pic_{\overline{\mathbb{Q}}}(\widetilde{V})) = 1$ since $V$ is split over $\mathbb{Q}$ and finally
\begin{eqnarray*}
\omega_H(\widetilde{V}) & = & \omega_{\infty} \prod_p \left( 1 - \frac1{p} \right)^6 \omega_p  \textrm{,}
\end{eqnarray*}
where $\omega_{\infty}$ and $\omega_p$ are respectively the archimedean and $p$-adic densities. The work of Derenthal \cite{MR2318651} provides the value
\begin{eqnarray*}
\alpha(\widetilde{V}) & = & \frac1{4320} \textrm{.}
\end{eqnarray*}
Furthermore, using \cite[Lemma 2.3]{Loughran}, we get
\begin{eqnarray*}
\omega_p & = & 1 + \frac{6}{p} + \frac1{p^2} \textrm{.}
\end{eqnarray*}
To calculate $\omega_{\infty}$, we set $f_1(x) = x_0 x_1 - x_2^2$, $f_2(x) = (x_0 + x_1 + x_3) x_3 - x_2 x_4$ and we parametrize the points of $V$ by $x_0$, $x_2$ and $x_3$. We have
\begin{eqnarray*}
\det \begin{pmatrix}
\frac{\partial f_1}{\partial x_1} & \frac{\partial f_1}{\partial x_4} \\
\frac{\partial f_2}{\partial x_1} & \frac{\partial f_2}{\partial x_4}
\end{pmatrix} & = &
\begin{vmatrix}
x_0 & 0 \\
x_3 & -x_2
\end{vmatrix} \\
& = & - x_0 x_2 \textrm{.}
\end{eqnarray*}
Moreover, $x_1 = x_2^2/x_0$ and $x_4 = (x_0^2 + x_2^2 + x_0x_3) x_3 / ( x_0 x_2 )$. Since
$\mathbf{x} = - \mathbf{x}$ in $\mathbb{P}^4$, we have
\begin{eqnarray*}
\omega_{\infty} & = & 2 \int \int \int_{x_0, x_2 > 0, x_0, x_2^2/x_0, |x_3|,
\left|x_0^2 + x_2^2 + x_0x_3 \right| \left| x_3 \right| / x_0 x_2 \leq 1}
\frac{\D x_0 \D x_2 \D x_3}{x_0x_2} \textrm{.}
\end{eqnarray*}
Define the function
\begin{eqnarray}
\label{equation h}
h & : & (u_2,t_7,t_6) \mapsto \max \{t_6,t_7, t_7 |t_7 - t_6 u_2|, |t_7 - t_6 u_2| |t_6 + t_7 u_2| \} \textrm{.}
\end{eqnarray}
The change of variables given by $x_0 = t_6^2$, $x_2 = t_6 t_7$ and $x_3 = - t_7 (t_7 - t_6 u_2)$ yields
\begin{eqnarray}
\label{omega}
\omega_{\infty} & = & 4 \int \int \int_{t_6,t_7>0, h(u_2,t_7,t_6) \leq 1} \D u_2 \D t_7 \D t_6 \textrm{.}
\end{eqnarray}

\section{Proof of the main theorem}

\subsection{First steps of the proof}

\label{First steps}

The idea of the proof is to see the equations \eqref{torsor 1} and \eqref{torsor 2} as congruences respectively modulo $\eta_5$ and $\eta_4$ and then to count the number of $\alpha_2$ satisfying these two congruences. In order to do so, we replace the height conditions \eqref{condition3} and \eqref{condition4} by
\begin{eqnarray*}
\eta_1 \eta_2 \eta_3 \eta_7 \left| \eta_1^2 \eta_2 \eta_4^2 \eta_7 - \eta_6 \alpha_2 \right| & \leq & B \textrm{,} \\
\eta_4^{-1} \eta_5^{-1} \left| \eta_1^2 \eta_2 \eta_4^2 \eta_7 - \eta_6 \alpha_2 \right|
\left| \eta_2 \eta_3^2 \eta_5^2 \eta_6 + \eta_7 \alpha_2 \right| & \leq & B \textrm{,}
\end{eqnarray*}
and we carry on denoting them the same way. We note that the equation \eqref{torsor 1} proves that we necessarily have
$\gcd(\eta_1\eta_2,\eta_6\alpha_2) = 1$ since we also have $\gcd(\eta_1\eta_2,\eta_5\alpha_1)~=~1$. Exactly the same way we get
$\gcd(\alpha_2,\eta_3\eta_5) = 1$ thanks to the equation \eqref{torsor 2} and $\gcd(\eta_3\eta_5,\eta_4\alpha_4) = 1$. The equation \eqref{torsor 2} and $\gcd(\eta_2,\eta_7\alpha_2) = 1$ also imply $\gcd(\eta_2,\eta_4) = 1$. This new coprimality condition together with the equation \eqref{torsor 2} yield $\gcd(\eta_4,\alpha_2) = 1$ since we have $\gcd(\eta_4,\eta_2\eta_3\eta_5\eta_6) = 1$. In a similar way, we finally obtain $\gcd(\alpha_1,\eta_4\eta_6) = 1$, $\gcd(\eta_4,\eta_7) = 1$ and $\gcd(\eta_5,\eta_6) = 1$. We can therefore rewrite the coprimality conditions \eqref{coprim1}, \eqref{coprim2}, \eqref{coprim3} and all these new conditions as
\begin{eqnarray}
\label{gcd1}
& & \gcd(\alpha_1,\eta_1\eta_2\eta_3\eta_4\eta_6) = 1 \textrm{,} \\
\label{gcd2}
& & \gcd(\alpha_4,\eta_1\eta_2\eta_3\eta_5\eta_7) = 1 \textrm{,} \\
\label{gcd3}
& & \gcd(\alpha_2,\eta_1\eta_2\eta_3\eta_4\eta_5) = 1 \textrm{,} \\
\label{gcd4}
& & \gcd(\eta_7,\eta_2\eta_3\eta_4\eta_5\eta_6) = 1 \textrm{,} \\
\label{gcd5}
& & \gcd(\eta_6,\eta_1\eta_2\eta_4\eta_5) = 1 \textrm{,} \\
\label{gcd6}
& & \gcd(\eta_1\eta_4,\eta_3\eta_5) = 1 \textrm{,} \\
\label{gcd7}
& & \gcd(\eta_2,\eta_4\eta_5) = 1 \textrm{.}
\end{eqnarray}
From now on, we set $\boldsymbol{\eta} = (\eta_1, \eta_2, \eta_3, \eta_4, \eta_5) \in \mathbb{Z}_{>0}^5$ and
$\boldsymbol{\eta}' = (\boldsymbol{\eta},\eta_6,\eta_7) \in \mathbb{Z}_{>0}^7$. Consider that
$\boldsymbol{\eta}' \in \mathbb{Z}_{>0}^7$ is fixed and is subject to the height conditions \eqref{condition1}, \eqref{condition2} and to the coprimality conditions \eqref{gcd4}, \eqref{gcd5}, \eqref{gcd6} and \eqref{gcd7}. Let $N(\boldsymbol{\eta}',B)$ be the number of $(\alpha_1,\alpha_2,\alpha_4) \in \mathbb{Z}$ satisfying the equations \eqref{torsor 1}, \eqref{torsor 2}, the height conditions \eqref{condition3} and \eqref{condition4} and finally the coprimality conditions \eqref{gcd1}, \eqref{gcd2} and \eqref{gcd3}. For $(r_1,r_2,r_3,r_4,r_5) \in \mathbb{Q}^5$, we define
\begin{eqnarray*}
\boldsymbol{\eta}^{(r_1,r_2,r_3,r_4,r_5)} & = & \eta_1^{r_1} \eta_2^{r_2} \eta_3^{r_3} \eta_4^{r_4} \eta_5^{r_5} \textrm{,}
\end{eqnarray*}
and we adopt the following notations in order to help in the understanding of the height conditions,
\begin{eqnarray*}
A_2 & = & \boldsymbol{\eta}^{(1,1,1,1,1)} \textrm{,} \\
Y_6 & = & \frac{B^{1/2}}{\boldsymbol{\eta}^{(1/2,1,3/2,0,1)}} \textrm{,} \\
Y_7 & = & \frac{B^{1/2}}{\boldsymbol{\eta}^{(3/2,1,1/2,1,0)}} \textrm{,}
\end{eqnarray*}
and recalling the definition \eqref{equation h} of the function $h$, we can sum up the height conditions \eqref{condition1}, \eqref{condition2}, \eqref{condition3} and \eqref{condition4} as
\begin{eqnarray*}
h \left( \frac{\alpha_2}{A_2}, \frac{\eta_7}{Y_7}, \frac{\eta_6}{Y_6} \right) & \leq & 1 \textrm{.}
\end{eqnarray*}
We also introduce the real-valued functions
\begin{eqnarray*}
g_1 & : & (t_7,t_6) \mapsto \int_{h(u_2,t_7,t_6) \leq 1} \D u_2 \textrm{,} \\
g_2 & : & (t_6;\boldsymbol{\eta},B) \mapsto \int_{t_7 Y_7 \geq 1} g_1(t_7,t_6) \D t_7 \textrm{,} \\
g_3 & : & (\boldsymbol{\eta},B) \mapsto \int_{t_6 Y_6 \geq 1} g_2(t_6;\boldsymbol{\eta},B) \D t_6 \textrm{.}
\end{eqnarray*}
We obviously have
\begin{eqnarray}
\label{g_3}
g_3(\boldsymbol{\eta},B) & = & \int \int \int_{t_6 Y_6 \geq 1, t_7 Y_7 \geq 1, h(u_2,t_7,t_6) \leq 1} \D u_2 \D t_7 \D t_6 \textrm{.}
\end{eqnarray}

\begin{lemma}
\label{bounds}
We have the bounds
\begin{eqnarray*}
g_1(t_7,t_6) & \ll & t_6^{-1/2} t_7^{-1/2} \textrm{,} \\
g_2(t_6;\boldsymbol{\eta},B) & \ll & t_6^{-1/2} \textrm{.}
\end{eqnarray*}
\end{lemma}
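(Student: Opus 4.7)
For the bound on $g_1$, the plan is to linearize one of the two quadratic constraints in the definition of $h$ by the change of variable $\tau = t_7(t_7 - t_6 u_2)$, whose Jacobian is $\mathrm{d}u_2 = -\mathrm{d}\tau/(t_6 t_7)$. The condition $h(u_2,t_7,t_6)\le 1$ forces in particular $t_6,t_7\le 1$ (from the first two arguments of the maximum), and its third argument $t_7|t_7-t_6u_2|\le 1$ becomes simply $|\tau|\le 1$. Writing $a = t_6^2+t_7^2$, a direct computation gives $t_6+t_7u_2 = (a-\tau)/t_6$, so the fourth argument of the maximum turns into the product constraint
\begin{eqnarray*}
|\tau|\,|a-\tau| & \le & t_6 t_7 \textrm{.}
\end{eqnarray*}

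The key elementary observation is the dichotomy: if $|\tau|\,|a-\tau|\le t_6t_7$, then either $|\tau|\le\sqrt{t_6t_7}$ or $|a-\tau|\le\sqrt{t_6t_7}$, for otherwise the product would exceed $t_6 t_7$. Consequently the set of admissible $\tau$ is contained in the union of two intervals of length $2\sqrt{t_6 t_7}$ each, hence has measure at most $4\sqrt{t_6t_7}$. Converting back via the Jacobian yields
\begin{eqnarray*}
g_1(t_7,t_6) & \le & \frac{4\sqrt{t_6t_7}}{t_6 t_7} \ = \ \frac{4}{\sqrt{t_6 t_7}} \textrm{,}
\end{eqnarray*}
which is the first claimed bound.

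For $g_2$, one simply integrates the bound just obtained. Since $h\le 1$ forces $t_7\le 1$, the integrand $g_1(t_7,t_6)$ vanishes outside $t_7\le 1$, so the range of integration is $t_7\in[1/Y_7,1]$, and
\begin{eqnarray*}
g_2(t_6;\boldsymbol{\eta},B) & \le & \frac{4}{\sqrt{t_6}}\int_{1/Y_7}^{1}\frac{\mathrm{d}t_7}{\sqrt{t_7}}
\ \le \ \frac{8}{\sqrt{t_6}} \textrm{,}
\end{eqnarray*}
giving the second bound. The only nontrivial step is spotting the right change of variable: once $\tau=t_7(t_7-t_6u_2)$ is in hand, both the Jacobian and the two height constraints take a form where the square-root behaviour arises from a completely elementary argument on a single-variable product inequality.
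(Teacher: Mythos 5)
Your proof is correct and follows essentially the paper's route: the paper's ``little thought'' is exactly the observation that the constraint $|t_7-t_6u_2|\,|t_6+t_7u_2|\leq 1$ is a sublevel set of a quadratic in $u_2$ with leading coefficient of size $t_6t_7$, hence of measure $\ll (t_6t_7)^{-1/2}$, which your change of variable $\tau=t_7(t_7-t_6u_2)$ and the product dichotomy make explicit. The deduction of the bound for $g_2$ from $t_7\leq 1$ is likewise the same as in the paper.
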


\begin{proof}
Recall the definition \eqref{equation h} of the function $h$. A little thought reveals that the condition $|t_7 - t_6 u_2| |t_6 + t_7 u_2| \leq 1$ implies that $u_2$ runs over a set whose measure is $\ll t_6^{-1/2} t_7^{-1/2}$ which gives the first bound. The second bound is an immediate consequence of the first since
$t_7 \leq 1$.
\end{proof}

We have the following result.

\begin{lemma}
\label{lemma inter}
The following estimate holds
\begin{eqnarray*}
N(\boldsymbol{\eta}',B) & = & \frac{A_2}{\eta_4 \eta_5} g_1 \left( \frac{\eta_7}{Y_7}, \frac{\eta_6}{Y_6} \right) \theta(\boldsymbol{\eta}') +
R( \boldsymbol{\eta}', B) \textrm{,}
\end{eqnarray*}
where $\theta(\boldsymbol{\eta}')$ is a certain arithmetic function given in \eqref{theta} and
\begin{eqnarray*}
\sum_{\boldsymbol{\eta}'} R( \boldsymbol{\eta}', B) & \ll & B \log(B)^2 \textrm{.}
\end{eqnarray*}
\end{lemma}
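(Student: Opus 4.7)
The idea is to fix $\boldsymbol{\eta}'$, use the equations \eqref{torsor 1} and \eqref{torsor 2} to eliminate $\alpha_1$ and $\alpha_4$ in favor of a congruence on $\alpha_2$ alone, and then count $\alpha_2$ in a suitable real interval by Möbius inversion.

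Step 1 (reduction to a single congruence). Equation \eqref{torsor 1} gives $\eta_5 \alpha_1 = \eta_6 \alpha_2 - \eta_1^2 \eta_2 \eta_4^2 \eta_7$; integrality of $\alpha_1$ thus forces $\eta_6 \alpha_2 \equiv \eta_1^2 \eta_2 \eta_4^2 \eta_7 \pmod{\eta_5}$, and $\gcd(\eta_5,\eta_6)=1$ from \eqref{gcd5} pins $\alpha_2$ to one residue modulo $\eta_5$. Symmetrically \eqref{torsor 2} and $\gcd(\eta_4,\eta_7)=1$ from \eqref{gcd4} fix $\alpha_2$ modulo $\eta_4$. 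Using $\gcd(\eta_4,\eta_5)=1$ from \eqref{gcd6}, the Chinese remainder theorem collapses these into a single congruence $\alpha_2 \equiv \rho(\boldsymbol{\eta}') \pmod{\eta_4 \eta_5}$, after which $\alpha_1$ and $\alpha_4$ are determined. The four height conditions, rewritten as on the variable $\alpha_2$, merge into the single requirement $h(\alpha_2/A_2, \eta_7/Y_7, \eta_6/Y_6) \leq 1$.

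Step 2 (translating coprimalities). Substituting the formulas for $\alpha_1$ and $\alpha_4$ into \eqref{gcd1}--\eqref{gcd3} and using the gcd hypotheses \eqref{gcd4}--\eqref{gcd7} on $\boldsymbol{\eta}'$, one checks that coprimality of $\alpha_1,\alpha_2,\alpha_4$ to $\eta_4,\eta_5,\eta_6,\eta_7$ is automatic. The surviving conditions are $\gcd(\alpha_2,\eta_1\eta_2\eta_3)=1$ together with one additional forbidden residue for each prime $p \mid \eta_1$ (from $\gcd(\alpha_4,\eta_1)=1$) and each prime $p \mid \eta_3$ (from $\gcd(\alpha_1,\eta_3)=1$). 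This forbidden residue coincides with $0$ exactly when $p \mid \eta_2$, which is precisely the arithmetic dichotomy accounted for by the factor $\varphi^{\circ}$ in the functions $\psi_{a,b}$ and therefore, expectedly, in $\theta(\boldsymbol{\eta}')$.

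Step 3 (counting and identification of the main term). Applying Möbius inversion to all these coprimality and excluded-residue conditions, $N(\boldsymbol{\eta}',B)$ becomes a sum, over squarefree $d$ dividing a suitable product (automatically coprime to $\eta_4\eta_5$), of counts of $\alpha_2$ in a single residue class modulo $d\eta_4\eta_5$ inside the region $h(\alpha_2/A_2, \eta_7/Y_7, \eta_6/Y_6) \leq 1$. The region is a union of a bounded number of intervals (controlled by the shape of $h$ through Lemma \ref{bounds}), of total length $A_2 g_1(\eta_7/Y_7, \eta_6/Y_6)$. Each count equals (length)$/(d\eta_4\eta_5) + O(1)$. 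Resumming the Möbius series assembles the local densities into $\theta(\boldsymbol{\eta}')$ and yields the main term $(A_2/(\eta_4\eta_5)) g_1(\eta_7/Y_7,\eta_6/Y_6) \theta(\boldsymbol{\eta}')$.

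Step 4 (global error bound). The main obstacle is the bound $\sum_{\boldsymbol{\eta}'} R(\boldsymbol{\eta}',B) \ll B \log(B)^2$. For each $\boldsymbol{\eta}'$ the error collected in Step 3 is at most $O(\tau(\eta_1\eta_2\eta_3))$. Summing over $\boldsymbol{\eta}' \in \mathbb{Z}_{>0}^7$ subject to \eqref{condition1}, \eqref{condition2}, \eqref{gcd4}--\eqref{gcd7} requires careful order of summation: one first uses the height constraints to integrate out $\eta_6$ and $\eta_7$ (whose ranges are of order $B^{1/2}$ up to powers of the remaining $\eta_i$), producing two $\log B$ factors; the sums over $\eta_1,\eta_2,\eta_3,\eta_4,\eta_5$ are then controlled by convergent series using the $\sigma_{-\delta}$-type averages from Lemmas \ref{arithmetic preliminary 0}--\ref{arithmetic preliminary 2}. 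Executing this reorganization cleanly, while keeping track of the $\tau$-weights, is the technical core of the argument.
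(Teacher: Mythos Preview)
Your approach is essentially the paper's: reduce via \eqref{torsor 1}--\eqref{torsor 2} to a single congruence on $\alpha_2$ modulo $\eta_4\eta_5$, remove the coprimality conditions \eqref{gcd1}--\eqref{gcd3} by M\"obius inversion, and count $\alpha_2$ in a region of length $A_2\,g_1(\eta_7/Y_7,\eta_6/Y_6)$. The paper organises the M\"obius step slightly differently---it inverts over $k_1\mid\eta_3$, $k_4\mid\eta_1$, $k_2\mid\eta_1\eta_2\eta_3$, so that the extra conditions on $\alpha_1,\alpha_4$ refine the modulus to $k_1k_4\eta_4\eta_5$ rather than appearing as additional forbidden residues---but the resulting main term and error are the same.

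Two details need fixing. In Step~2 your residue dichotomy is oversimplified: the formula \eqref{theta} shows that the product over $p\mid\eta_3$ excludes $p\mid\eta_2\eta_5\eta_6$ (not only $p\mid\eta_2$), the product over $p\mid\eta_1$ excludes $p\mid\eta_2\eta_4\eta_7$, and there are further factors $\varphi^{\ast}(\gcd(\eta_1,\eta_4))/\varphi^{\ast}(\eta_4)$ and $\varphi^{\ast}(\gcd(\eta_3,\eta_5))/\varphi^{\ast}(\eta_5)$ accounting for primes shared between $\eta_1,\eta_4$ and between $\eta_3,\eta_5$. In Step~4 your bookkeeping is off: summing the $O(1)$ error (weighted by a divisor-type function of $\eta_1\eta_2\eta_3$) over $\eta_6\le Y_6$ and $\eta_7\le Y_7$ does not give logarithms but rather the factor $Y_6Y_7=B/\boldsymbol{\eta}^{(2,2,2,1,1)}$; the two factors of $\log B$ then come from the harmonic sums over $\eta_4$ and $\eta_5$, while the sums over $\eta_1,\eta_2,\eta_3$ (carrying weight $\eta_i^{-2}$) are genuinely convergent. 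With this correction the bound $\sum_{\boldsymbol{\eta}'}R(\boldsymbol{\eta}',B)\ll B\log(B)^2$ follows exactly as in the paper.
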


Let us remove the coprimality conditions \eqref{gcd1} and \eqref{gcd2} employing two Möbius inversions, we get
\begin{eqnarray*}
N(\boldsymbol{\eta}',B) & = & \sum_{k_1|\eta_1\eta_2\eta_3\eta_4\eta_6} \mu(k_1) \sum_{k_4|\eta_1\eta_2\eta_3\eta_5\eta_7} \mu(k_4)
S_{k_1,k_4} \textrm{,}
\end{eqnarray*}
where, with the notations $\alpha_1 = k_1 \alpha_1'$ and $\alpha_4 = k_4 \alpha_4'$,
\begin{eqnarray*}
S_{k_1,k_4} & = & \# \left\{ (\alpha_1',\alpha_4',\alpha_2) \in \mathbb{Z}^3,
\begin{array}{l}
\eta_1^2 \eta_2 \eta_4^2 \eta_7 + \eta_5 k_1 \alpha_1' - \eta_6 \alpha_2 = 0 \\
\eta_2 \eta_3^2 \eta_5^2 \eta_6 + \eta_7 \alpha_2 - \eta_4 k_4 \alpha_4' = 0 \\
\eqref{condition3}, \eqref{condition4}, \eqref{gcd3}
\end{array}
\right\} \\
& = & \# \left\{ \alpha_2 \in \mathbb{Z},
\begin{array}{l}
\eta_6 \alpha_2 \equiv \eta_1^2 \eta_2 \eta_4^2 \eta_7 \imod{k_1 \eta_5} \\
\eta_7 \alpha_2 \equiv - \eta_2 \eta_3^2 \eta_5^2 \eta_6 \imod{k_4 \eta_4} \\
\eqref{condition3}, \eqref{condition4}, \eqref{gcd3}
\end{array}
\right\} \textrm{.}
\end{eqnarray*}
We note that we necessarily have $\gcd(k_1,\eta_6) = 1$ since $\gcd(\eta_6,\eta_1\eta_2\eta_4\eta_7) = 1$ and $\gcd(k_1,\eta_1\eta_2\eta_4) = 1$ since $\gcd(\eta_1\eta_2\eta_4,\eta_6\alpha_2) = 1$. In a similar way, we also have $\gcd(k_4,\eta_2\eta_3\eta_5\eta_7)=1$. In particular, we see that $\eta_6$ and $\eta_7$ are respectively invertible modulo $k_1 \eta_5$  and $k_4 \eta_4$. We therefore get
\begin{eqnarray*}
N(\boldsymbol{\eta}',B) & = & \sum_{\substack{k_1|\eta_3 \\ \gcd(k_1,\eta_1\eta_2\eta_4\eta_6) = 1}} \mu(k_1)
\sum_{\substack{k_4|\eta_1 \\ \gcd(k_4,\eta_2\eta_3\eta_5\eta_7) = 1}} \mu(k_4) S_{k_1,k_4} \textrm{,}
\end{eqnarray*}
and
\begin{eqnarray*}
S_{k_1,k_4} & = & \# \left\{ \alpha_2 \in \mathbb{Z},
\begin{array}{l}
\alpha_2 \equiv \eta_6^{-1} \eta_1^2 \eta_2 \eta_4^2 \eta_7 \imod{k_1 \eta_5} \\
\alpha_2 \equiv - \eta_7^{-1} \eta_2 \eta_3^2 \eta_5^2 \eta_6 \imod{k_4 \eta_4} \\
\eqref{condition3}, \eqref{condition4}, \eqref{gcd3}
\end{array}
\right\} \textrm{.}
\end{eqnarray*}
Furthermore, $k_1\eta_5$ and $k_4\eta_4$  are coprime since $\eta_3\eta_5$ and $\eta_1\eta_4$ are coprime thus the Chinese remainder theorem gives
\begin{eqnarray*}
S_{k_1,k_4} & = & \# \left\{ \alpha_2 \in \mathbb{Z},
\begin{array}{l}
\alpha_2 \equiv a \imod{k_1 k_4 \eta_4 \eta_5} \\
\eqref{condition3}, \eqref{condition4}, \eqref{gcd3}
\end{array}
\right\} \textrm{,}
\end{eqnarray*}
for a certain integer $a$ coprime to $k_1 k_4 \eta_4 \eta_5$ since $\gcd(k_1k_4\eta_4\eta_5,\alpha_2) = 1$. A Möbius inversion yields
\begin{eqnarray*}
S_{k_1,k_4} & = & \sum_{k_2|\eta_1\eta_2\eta_3\eta_4\eta_5} \mu(k_2) \# \left\{ \alpha_2' \in \mathbb{Z},
\begin{array}{l}
k_2 \alpha_2' \equiv a \imod{k_1 k_4 \eta_4 \eta_5} \\
\eqref{condition3}, \eqref{condition4}
\end{array}
\right\} \\
& = & \sum_{\substack{k_2|\eta_1\eta_2\eta_3 \\ \gcd(k_2,k_1k_4\eta_4\eta_5) = 1}} \mu(k_2)
\# \left\{ \alpha_2' \in \mathbb{Z},
\begin{array}{l}
\alpha_2' \equiv k_2^{-1} a \imod{k_1 k_4 \eta_4 \eta_5} \\
\eqref{condition3}, \eqref{condition4}
\end{array}
\right\}\textrm{,}
\end{eqnarray*}
since $\gcd(k_1 k_4 \eta_4 \eta_5,a) = 1$. Using the elementary estimate
\begin{eqnarray*}
\# \left\{ n \in \mathbb{Z} \cap [t_1,t_2], n \equiv a \imod{q} \right\} & = & \frac{t_2 - t_1}{q} + O(1) \textrm{,}
\end{eqnarray*}
and the change of variable $u_2 \mapsto u_2 A_2/k_2$, we get
\begin{eqnarray*}
\# \left\{ \alpha_2' \in \mathbb{Z},
\begin{array}{l}
\alpha_2' \equiv k_2^{-1} a \imod{k_1 k_4 \eta_4 \eta_5} \\
\eqref{condition3}, \eqref{condition4}
\end{array}
\right\} & = & \frac{A_2}{k_2 k_1 k_4 \eta_4 \eta_5} g_1 \left( \frac{\eta_7}{Y_7},\frac{\eta_6}{Y_6} \right) + O(1) \textrm{.}
\end{eqnarray*}
We see that the main term of $N(\boldsymbol{\eta}',B)$ is equal to
\begin{eqnarray*}
\frac{A_2}{\eta_4 \eta_5} g_1 \left( \frac{\eta_7}{Y_7},\frac{\eta_6}{Y_6} \right) \theta(\boldsymbol{\eta}') \textrm{,}
\end{eqnarray*}
where
\begin{eqnarray*}
\theta(\boldsymbol{\eta}') & = & \sum_{\substack{k_1|\eta_3 \\ \gcd(k_1,\eta_1\eta_2\eta_4\eta_6) = 1}} \frac{\mu(k_1)}{k_1} \sum_{\substack{k_4|\eta_1 \\ \gcd(k_4,\eta_2\eta_3\eta_5\eta_7) = 1}} \frac{\mu(k_4)}{k_4}
\sum_{\substack{k_2|\eta_1\eta_2\eta_3 \\ \gcd(k_2,k_1k_4\eta_4\eta_5) = 1}} \frac{\mu(k_2)}{k_2} \\
& = & \varphi^{\ast}(\eta_1\eta_2\eta_3\eta_4\eta_5)
\sum_{\substack{k_1|\eta_3 \\ \gcd(k_1,\eta_2\eta_6) = 1}} \frac{\mu(k_1)}{k_1\varphi^{\ast}(k_1\eta_5)}
\sum_{\substack{k_4|\eta_1 \\ \gcd(k_4,\eta_2\eta_7) = 1}} \frac{\mu(k_4)}{k_4\varphi^{\ast}(k_4\eta_4)} \textrm{.}
\end{eqnarray*}
We have removed $\eta_1\eta_4$ from the condition over $k_1$ and $\eta_3\eta_5$ from the condition over $k_4$ respectively because
$\gcd(\eta_3,\eta_1\eta_4) = 1$ and $\gcd(\eta_1,\eta_3\eta_5) = 1$. A straightforward calculation yields, for
$a,b,c \geq 1$,
\begin{eqnarray*}
\sum_{\substack{k|a \\ \gcd(k,c) = 1}} \frac{\mu(k)}{k\varphi^{\ast}(kb)} & = & \frac{\varphi^{\ast}(\gcd(a,b))}{\varphi^{\ast}(b)\varphi^{\ast}(\gcd(a,b,c))} \prod_{p|a, p \nmid bc} \left( 1 - \frac1{p-1} \right) \textrm{.}
\end{eqnarray*}
Therefore, we have obtained
\begin{eqnarray}
\label{theta}
\theta(\boldsymbol{\eta}') & = & \theta_1 (\boldsymbol{\eta},\eta_6)
\prod_{p|\eta_1, p \nmid \eta_2\eta_4\eta_7} \left( 1 - \frac1{p-1} \right) \textrm{,}
\end{eqnarray}
where $\theta_1 (\boldsymbol{\eta},\eta_6)$ denotes
\begin{eqnarray*}
\varphi^{\ast}(\eta_1\eta_2\eta_3\eta_4\eta_5)
\frac{\varphi^{\ast}(\gcd(\eta_1,\eta_4))}{\varphi^{\ast}(\eta_4)}
\frac{\varphi^{\ast}(\gcd(\eta_3,\eta_5))}{\varphi^{\ast}(\eta_5)} 
\prod_{p|\eta_3, p \nmid \eta_2\eta_5\eta_6} \left( 1 - \frac1{p-1} \right) \textrm{.}
\end{eqnarray*}
In addition, we see that the overall contribution of the error term is
\begin{eqnarray*}
\sum_{\boldsymbol{\eta},\eta_6,\eta_7} 2^{\omega(\eta_3)} 2^{\omega(\eta_1)} 2^{\omega(\eta_1\eta_2\eta_3)} & \ll & \sum_{\boldsymbol{\eta}} 2^{\omega(\eta_3)} 2^{\omega(\eta_1)} 2^{\omega(\eta_1\eta_2\eta_3)} Y_6 Y_7 \\
& = & \sum_{\boldsymbol{\eta}} 2^{\omega(\eta_3)} 2^{\omega(\eta_1)} 2^{\omega(\eta_1\eta_2\eta_3)} \frac{B}{\boldsymbol{\eta}^{(2,2,2,1,1)}} \\
& \ll & B \log(B)^2 \textrm{,}
\end{eqnarray*}
where we have summed over $\eta_6$ and $\eta_7$ using respectively the height conditions \eqref{condition1} and \eqref{condition2}. This completes the proof of lemma \ref{lemma inter}.

\subsection{Summation over $\eta_7$}

To carry out the summations over $\eta_7$ and $\eta_6$, we let
\begin{eqnarray}
\label{V}
\mathcal{V} & = & \left\{\boldsymbol{\eta} \in \mathbb{Z}_{>0}^5, Y_6 \geq 1, Y_7 \geq 1 \right\} \textrm{,}
\end{eqnarray}
and we assume that $\boldsymbol{\eta} \in \mathcal{V}$ is fixed and is subject to the coprimality conditions \eqref{gcd6} and \eqref{gcd7}. Our next task is to sum over $\eta_7$, that is why we have isolated $\eta_7$ in $\theta(\boldsymbol{\eta}')$. Let us define
\begin{eqnarray*}
\mathcal{N} & = & \{ (\eta_1,\eta_2,\eta_4) \in \mathbb{Z}_{>0}^3, 2 \nmid \eta_1 \textrm{ or } 2|\eta_2\eta_4 \} \textrm{.}
\end{eqnarray*}
It is plain to see that if
$(\eta_1,\eta_2,\eta_4) \in \mathcal{N}$ or $2|\eta_7$ then
\begin{eqnarray*}
\prod_{p|\eta_1, p \nmid \eta_2\eta_4\eta_7} \left( 1 - \frac1{p-1} \right) & = &
\prod_{\substack{p|\eta_1, p \nmid \eta_2\eta_4\eta_7 \\ p \neq 2}} \left( 1 - \frac1{p-1} \right) \textrm{,}
\end{eqnarray*}
and this product is equal to $0$ otherwise. Furthermore, since $\eta_2 \eta_4$ and $\eta_7$ are coprime, we see that
\begin{eqnarray*}
\prod_{\substack{p|\eta_1, p \nmid \eta_2\eta_4\eta_7 \\ p \neq 2}} \left( 1 - \frac1{p-1} \right) & = & \frac{\varphi^{\circ}(\eta_1)}{\varphi^{\circ}(\gcd(\eta_1,\eta_2\eta_4))\varphi^{\circ}(\gcd(\eta_1,\eta_7))} \textrm{.}
\end{eqnarray*}
We need to treat two cases separately depending on whether $(\eta_1,\eta_2,\eta_4) \in \mathcal{N}$ or
$(\eta_1,\eta_2,\eta_4) \notin \mathcal{N}$ (note that, in the latter case, the main term of $N(\boldsymbol{\eta}',B)$ vanishes if
$2 \nmid \eta_7$). For fixed $\eta_6$ satisfying the height condition \eqref{condition1} and the coprimality condition \eqref{gcd5}, we call $N(\boldsymbol{\eta},\eta_6,B)$ the sum of the main term of $N(\boldsymbol{\eta}',B)$ over $\eta_7$, $\eta_7$ being subject to the height condition \eqref{condition2} and to the coprimality condition \eqref{gcd4}. We also use $N_1(\boldsymbol{\eta},\eta_6,B)$ and $N_2(\boldsymbol{\eta},\eta_6,B)$ to denote the sums over $\eta_7$ respectively for $(\eta_1,\eta_2,\eta_4) \in \mathcal{N}$ and $(\eta_1,\eta_2,\eta_4) \notin \mathcal{N}$. We now proceed to prove the following lemma.

\begin{lemma}
\label{sum eta_7}
We have the estimate
\begin{eqnarray*}
N(\boldsymbol{\eta},\eta_6,B) & = & \frac{A_2Y_7}{\eta_4 \eta_5} g_2 \left( \frac{\eta_6}{Y_6}; \boldsymbol{\eta},B \right) \theta_1'(\boldsymbol{\eta}) \theta_2'(\boldsymbol{\eta},\eta_6) + R(\boldsymbol{\eta},\eta_6,B) \textrm{,}
\end{eqnarray*}
where $\theta_1'(\boldsymbol{\eta})$ and $\theta_2'(\boldsymbol{\eta},\eta_6)$ are arithmetic functions defined in \eqref{theta_1'} and \eqref{theta_2'} and
\begin{eqnarray*}
\sum_{\boldsymbol{\eta},\eta_6} R(\boldsymbol{\eta},\eta_6,B) & \ll & B \log(B)^4 \textrm{.}
\end{eqnarray*}
\end{lemma}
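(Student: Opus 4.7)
My plan is to plug the formula for $N(\boldsymbol{\eta}',B)$ from Lemma \ref{lemma inter} into the sum over $\eta_7$, split it according to whether $(\eta_1,\eta_2,\eta_4)\in\mathcal{N}$ or not, and in each case interpret the product over primes dividing $\eta_1$ but not $\eta_2\eta_4\eta_7$ via the arithmetic function $\psi_{a,b}$. As noted just before the lemma, in both cases (with the additional restriction $2\mid\eta_7$ in the second) this product equals
\[
\frac{\varphi^{\circ}(\eta_1)}{\varphi^{\circ}(\gcd(\eta_1,\eta_2\eta_4))\,\varphi^{\circ}(\gcd(\eta_1,\eta_7))},
\]
and the coprimality \eqref{gcd4} means $\eta_7$ is automatically coprime to $\eta_2\eta_3\eta_4\eta_5\eta_6$. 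Pulling the factors independent of $\eta_7$ out of the sum, the contribution of Case~1 becomes
\[
\frac{A_2}{\eta_4\eta_5}\,\theta_1(\boldsymbol{\eta},\eta_6)\,\frac{\varphi^{\circ}(\eta_1)}{\varphi^{\circ}(\gcd(\eta_1,\eta_2\eta_4))}\sum_{\eta_7\leq Y_7}\psi_{\eta_1,\eta_2\eta_3\eta_4\eta_5\eta_6}(\eta_7)\,g_1\!\left(\tfrac{\eta_7}{Y_7},\tfrac{\eta_6}{Y_6}\right),
\]
and Case~2 is the analogous expression with the sum restricted to $\eta_7\equiv 0\imod{2}$.

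Next I would apply Lemma \ref{arithmetic preliminary} to the first inner sum and Lemma \ref{arithmetic preliminary 2} to the second (whose hypothesis $2\nmid b$ holds thanks to \eqref{gcd4}, and for Case~2 we also have $2\mid\eta_1$). Since $g_1(t_7,t_6)$ vanishes as soon as $t_7>1$ (because $h\geq t_7$ forces $t_7\leq 1$ in the defining inequality of $g_1$), the change of variable $t_7=\eta_7/Y_7$ gives
\[
\int_0^{Y_7} g_1\!\left(\tfrac{t}{Y_7},\tfrac{\eta_6}{Y_6}\right) \D t \;=\; Y_7 \int_{t_7 Y_7\geq 1} g_1\!\left(t_7,\tfrac{\eta_6}{Y_6}\right) \D t_7 \;=\; Y_7\,g_2\!\left(\tfrac{\eta_6}{Y_6};\boldsymbol{\eta},B\right).
\]
Both cases therefore produce the same type of main term, the factor $\tfrac12$ in Case~2 being cancelled by summing the two contributions (Case~1 when $(\eta_1,\eta_2,\eta_4)\in\mathcal{N}$ and Case~2 otherwise). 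Identifying $\theta_1'(\boldsymbol{\eta})$ with the combination of $\Psi(\eta_1,\eta_2\eta_3\eta_4\eta_5)$-type factors (absorbing the $\varphi^{\circ}$ quotient and the $\boldsymbol{\eta}$-part of $\theta_1$) and $\theta_2'(\boldsymbol{\eta},\eta_6)$ with the $\eta_6$-dependent residue (coming from $\varphi^{\flat}(\gcd(\eta_1,\eta_6))^{-1}$, the $\varphi^{\ast}(\eta_6)$ factor, and the condition $\gcd(\eta_1\eta_2\eta_4\eta_5,\eta_6)=1$ encoded in \eqref{gcd5}) gives the claimed formula.

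The main obstacle is controlling the error. Each inner application of Lemma \ref{arithmetic preliminary} yields
\[
R \;\ll_{\delta}\; \sigma_{-\delta}(\eta_1\eta_2\eta_3\eta_4\eta_5\eta_6)\,Y_7^{\delta}\,M_I\!\left(g_1\!\left(\cdot,\tfrac{\eta_6}{Y_6}\right)\right).
\]
Using Lemma \ref{bounds} with $t_7\geq 1/Y_7$ gives $\sup|g_1|\ll Y_7^{1/2}(Y_6/\eta_6)^{1/2}$, and the number of sign changes of $t_7\mapsto g_1(t_7,\eta_6/Y_6)$ is bounded absolutely since the region $\{u_2:h(u_2,t_7,t_6)\leq 1\}$ is a union of a bounded number of intervals. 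Multiplying by the prefactor $A_2/(\eta_4\eta_5)\cdot\theta(\boldsymbol{\eta}')\ll A_2/(\eta_4\eta_5)$ and summing over $\eta_6\leq Y_6$ (which gains a factor $Y_6$) and over $\boldsymbol{\eta}\in\mathcal{V}$, the divisor function $\sigma_{-\delta}$ is absorbed into $O(1)$ for $\delta$ small enough, while the remaining sum of $A_2 Y_6 Y_7^{1/2+\delta}\cdot Y_7^{1/2}/(\eta_4\eta_5) = A_2 Y_6 Y_7^{1+\delta}/(\eta_4\eta_5)$ is of the same shape as the error estimate at the end of the proof of Lemma \ref{lemma inter}, but multiplied by $2^{\omega(\boldsymbol{\eta})}\log$-factors. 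Careful bookkeeping of the divisor sums over $\eta_1,\dots,\eta_5$ (using $\eta_6\leq Y_6$ to eliminate $\eta_6$) produces the extra $\log(B)^2$ factor compared to Lemma \ref{lemma inter}, giving the final bound $B\log(B)^4$.
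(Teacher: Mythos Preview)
Your approach is the same as the paper's: split according to $\mathcal N$, apply Lemma~\ref{arithmetic preliminary} in Case~1 and Lemma~\ref{arithmetic preliminary 2} in Case~2, and bound the error via Lemma~\ref{bounds}. However, two points in your write‑up are not right.

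\medskip
\textbf{The cancellation of the factor $\tfrac12$.} The two cases are mutually exclusive in $\boldsymbol{\eta}$; for a fixed $\boldsymbol{\eta}$ you are either in Case~1 or in Case~2, never both, so the $\tfrac12$ cannot be ``cancelled by summing the two contributions''. The actual mechanism is the identity stated at the start of Section~2: for even $n$ one has $\varphi^{\circ}(n)\varphi^{\flat}(n)=2\varphi^{\ast}(n)$. In Case~2 we have $2\mid\eta_1$ and $2\nmid\eta_2\eta_4$, so
\[
\frac{\varphi^{\circ}(\eta_1)}{\varphi^{\circ}(\gcd(\eta_1,\eta_2\eta_4))}\,
\frac{\varphi^{\flat}(\eta_1)}{\varphi^{\flat}(\gcd(\eta_1,\eta_2\eta_4))}
=2\,\frac{\varphi^{\ast}(\eta_1)}{\varphi^{\ast}(\gcd(\eta_1,\eta_2\eta_4))},
\]
and this extra $2$ is what kills the $\tfrac12$ coming from Lemma~\ref{arithmetic preliminary 2}. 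In Case~1 the same ratio equals $\varphi^{\ast}(\eta_1)/\varphi^{\ast}(\gcd(\eta_1,\eta_2\eta_4))$ directly. This is why both cases yield the same main term $\theta_1'(\boldsymbol{\eta})\theta_2'(\boldsymbol{\eta},\eta_6)$. Incidentally, your description of $\theta_2'$ is off: since $\gcd(\eta_1,\eta_6)=1$ by \eqref{gcd5} there is no $\varphi^{\flat}(\gcd(\eta_1,\eta_6))^{-1}$ contribution; $\theta_2'$ is built from $\varphi^{\ast}(\eta_6)/\varphi^{\ast}(\gcd(\eta_6,\eta_3))$ and the residual product over $p\mid\eta_3$, $p\nmid\eta_2\eta_5\eta_6$.

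\medskip
\textbf{The error term.} Your bookkeeping introduces a spurious extra $Y_7^{1/2}$. After applying Lemma~\ref{arithmetic preliminary} the error for fixed $(\boldsymbol{\eta},\eta_6)$ is
\[
\ll \frac{A_2}{\eta_4\eta_5}\,\sigma_{-\delta}(\eta_1\eta_2\eta_3\eta_4\eta_5\eta_6)\,Y_7^{\delta}\sup_{t_7\geq 1/Y_7} g_1\!\left(t_7,\tfrac{\eta_6}{Y_6}\right)
\ll \frac{A_2}{\eta_4\eta_5}\,\sigma_{-\delta}(\cdots)\,Y_7^{\delta+1/2}\left(\tfrac{Y_6}{\eta_6}\right)^{1/2},
\]
with only one factor $Y_7^{1/2}$ (from Lemma~\ref{bounds}). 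Summing $\eta_6^{-1/2}$ over $\eta_6\leq Y_6$ contributes $Y_6^{1/2}$, so the total is $\ll \sigma_{-\delta}(\cdots)\,A_2 Y_6 Y_7^{1/2+\delta}/(\eta_4\eta_5)$, not $A_2 Y_6 Y_7^{1+\delta}/(\eta_4\eta_5)$. With $\delta=1/4$ this is $A_2 Y_6 Y_7^{3/4}/(\eta_4\eta_5)$; summing over $\eta_4$ via $Y_7\geq 1$ and then over $\eta_1,\eta_2,\eta_3,\eta_5$ gives $B\log(B)^4$. Your expression with $Y_7^{1+\delta}$ would instead give $B/\boldsymbol{\eta}^{(1,1,1,1,1)}$ before summing over $\boldsymbol{\eta}\in\mathcal V$, which is $B\log(B)^5$ and too large.
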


First, we estimate the contribution of $N_1(\boldsymbol{\eta},\eta_6,B)$. For this, we make use of the first estimate of lemma \ref{arithmetic preliminary} to deduce that for any fixed $0 < \delta \leq 1$, we have
\begin{eqnarray*}
N_1(\boldsymbol{\eta},\eta_6,B) & = & \frac{A_2Y_7}{\eta_4 \eta_5} g_2 \left( \frac{\eta_6}{Y_6}; \boldsymbol{\eta},B \right)
\theta_1 (\boldsymbol{\eta},\eta_6) \frac{\varphi^{\circ}(\eta_1)}{\varphi^{\circ}(\gcd(\eta_1,\eta_2\eta_4))} \Psi(\eta_1,\eta_2\eta_3\eta_4\eta_5\eta_6) \\
& & + O \left( \frac{A_2}{\eta_4 \eta_5} Y_7^{\delta} \sigma_{- \delta}(\eta_1\eta_2\eta_3\eta_4\eta_5\eta_6)
\sup_{t_7 Y_7 \geq 1} g_1 \left( t_7, \frac{\eta_6}{Y_6} \right) \right) \textrm{.}
\end{eqnarray*}
To estimate the overall contribution of this error term, we use the bound of lemma \ref{bounds} for $g_1$ and we choose
$\delta = 1/4$. The average order of $\sigma_{- 1/4}$ is $O(1)$ so we see that this contribution is
\begin{eqnarray*}
\sum_{\boldsymbol{\eta},\eta_6} \sigma_{- 1/4}(\eta_1\eta_2\eta_3\eta_4\eta_5\eta_6)
\frac{A_2 Y_6^{1/2} Y_7^{3/4}}{\eta_4 \eta_5 \eta_6^{1/2}} & \ll &
\sum_{\boldsymbol{\eta}} \sigma_{- 1/4}(\eta_1\eta_2\eta_3\eta_4\eta_5) \frac{A_2 Y_6 Y_7^{3/4}}{\eta_4 \eta_5} \\
& \ll & \sum_{\eta_1,\eta_2,\eta_3,\eta_5} \sigma_{- 1/4}(\eta_1\eta_2\eta_3\eta_5) \frac{B}{\boldsymbol{\eta}^{(1,1,1,0,1)}} \\
& \ll & B \log(B)^4 \textrm{,}
\end{eqnarray*}
where we have summed over $\eta_6$ and $\eta_4$ using respectively the conditions \eqref{condition1} and $Y_7 \geq 1$. Concerning the main term, we have
\begin{eqnarray*}
\Psi(\eta_1,\eta_2\eta_3\eta_4\eta_5\eta_6) & = & \varphi^{\ast}(\eta_2\eta_3\eta_4\eta_5\eta_6) \frac{\varphi^{\flat}(\eta_1)}{\varphi^{\flat}(\gcd(\eta_1,\eta_2\eta_4))} \textrm{,}
\end{eqnarray*}
and since $(\eta_1,\eta_2,\eta_4) \in \mathcal{N}$, we also have
\begin{eqnarray*}
\frac{\varphi^{\circ}(\eta_1)}{\varphi^{\circ}(\gcd(\eta_1,\eta_2\eta_4))}
\frac{\varphi^{\flat}(\eta_1)}{\varphi^{\flat}(\gcd(\eta_1,\eta_2\eta_4))} & = &
\frac{\varphi^{\ast}(\eta_1)}{\varphi^{\ast}(\gcd(\eta_1,\eta_2\eta_4))} \textrm{.}
\end{eqnarray*}
These equalities and a short calculation prove that
\begin{eqnarray*}
& & \theta_1(\boldsymbol{\eta},\eta_6) \frac{\varphi^{\circ}(\eta_1)}{\varphi^{\circ}(\gcd(\eta_1,\eta_2\eta_4))}
\Psi(\eta_1,\eta_2\eta_3\eta_4\eta_5\eta_6)
\end{eqnarray*}
can be rewritten as $\theta_1'(\boldsymbol{\eta}) \theta_2'(\boldsymbol{\eta},\eta_6)$ for
\begin{eqnarray}
\label{theta_1'}
\ \ \ \ \theta_1'(\boldsymbol{\eta}) & = & \varphi^{\ast}(\eta_1\eta_2\eta_3\eta_4\eta_5) \varphi^{\ast}(\eta_2\eta_3\eta_4\eta_5)
\frac{\varphi^{\ast}(\eta_1\eta_2)}{\varphi^{\ast}(\eta_2\eta_4)}
\frac{\varphi^{\ast}(\gcd(\eta_3,\eta_5))}{\varphi^{\ast}(\eta_5)} \textrm{,} \\
\label{theta_2'}
\ \ \ \ \theta_2'(\boldsymbol{\eta},\eta_6) & =& \frac{\varphi^{\ast}(\eta_6)}{\varphi^{\ast}(\gcd(\eta_6,\eta_3))}
\prod_{p|\eta_3, p \nmid \eta_2\eta_5\eta_6} \left( 1 - \frac1{p-1} \right) \textrm{.}
\end{eqnarray}

We now turn to the estimation of $N_2(\boldsymbol{\eta},\eta_6,B)$. We only need to sum on the even $\eta_7$ and so, given the coprimality condition \eqref{gcd4}, $\eta_2\eta_3\eta_4\eta_5\eta_6$ is odd and thus we can make use of the first estimate of lemma \ref{arithmetic preliminary 2}. The error term is the same as the previous one and, in the main term, there are exactly two differences with the case of $N_1(\boldsymbol{\eta},\eta_6,B)$. The first is the factor $1/2$ and the second is the fact that here, since $(\eta_1,\eta_2,\eta_4) \notin \mathcal{N}$,
\begin{eqnarray*}
\frac{\varphi^{\circ}(\eta_1)}{\varphi^{\circ}(\gcd(\eta_1,\eta_2\eta_4))}
\frac{\varphi^{\flat}(\eta_1)}{\varphi^{\flat}(\gcd(\eta_1,\eta_2\eta_4))} & = & 2
\frac{\varphi^{\ast}(\eta_1)}{\varphi^{\ast}(\gcd(\eta_1,\eta_2\eta_4))} \textrm{,}
\end{eqnarray*}
and thus we find exactly the same main term, which completes the proof of lemma \ref{sum eta_7}.

\subsection{Summation over $\eta_6$}

We now proceed to sum over $\eta_6$. We set
\begin{eqnarray*}
\mathcal{M} & = & \{ (\eta_3,\eta_2,\eta_5) \in \mathbb{Z}_{>0}^3, 2 \nmid \eta_3 \textrm{ or } 2|\eta_2\eta_5 \} \textrm{.}
\end{eqnarray*}
As for the summation over $\eta_7$, it is clear that if $(\eta_3,\eta_2,\eta_5) \in \mathcal{M}$ or $2|\eta_6$ then
\begin{eqnarray*}
\prod_{p|\eta_3, p \nmid \eta_2\eta_5\eta_6} \left( 1 - \frac1{p-1} \right) & = &
\prod_{\substack{p|\eta_3, p \nmid \eta_2\eta_5\eta_6 \\ p \neq 2}} \left( 1 - \frac1{p-1} \right) \textrm{,}
\end{eqnarray*}
and this product is equal to $0$ otherwise. Furthermore, since $\eta_2 \eta_5$ and $\eta_6$ are coprime, we have
\begin{eqnarray*}
\prod_{\substack{p|\eta_3, p \nmid \eta_2\eta_5\eta_6 \\ p \neq 2}} \left( 1 - \frac1{p-1} \right) & = &
\frac{\varphi^{\circ}(\eta_3)}{\varphi^{\circ}(\gcd(\eta_3,\eta_2\eta_5))\varphi^{\circ}(\gcd(\eta_3,\eta_6))} \textrm{.}
\end{eqnarray*}
We need to treat two cases separately depending on whether $(\eta_3,\eta_2,\eta_5) \in \mathcal{M}$ or
$(\eta_3,\eta_2,\eta_5) \notin \mathcal{M}$ (note that, in the latter case, the main term of $N(\boldsymbol{\eta},\eta_6,B)$ vanishes if $2 \nmid \eta_6$). Let $\mathbf{N}(\boldsymbol{\eta},B)$ be the sum of the main term of $N(\boldsymbol{\eta},\eta_6,B)$ over $\eta_6$, $\eta_6$ satisfying the height condition \eqref{condition1} and the coprimality condition \eqref{gcd5} and let also $\mathbf{N}_1(\boldsymbol{\eta},B)$ and $\mathbf{N}_2(\boldsymbol{\eta},B)$ be the sums over $\eta_6$ respectively for $(\eta_3,\eta_2,\eta_5) \in \mathcal{M}$ and
$(\eta_3,\eta_2,\eta_5) \notin \mathcal{M}$.

\begin{lemma}
\label{sum eta_6}
We have the estimate
\begin{eqnarray*}
\mathbf{N}(\boldsymbol{\eta},B) & = & \zeta(2)^{-1} \frac{B}{\boldsymbol{\eta}^{(1,1,1,1,1)}} g_3(\boldsymbol{\eta},B) \Theta(\boldsymbol{\eta}) + \mathbf{R}(\boldsymbol{\eta},B) \textrm{,}
\end{eqnarray*}
where
\begin{eqnarray*}
\Theta(\boldsymbol{\eta}) & = & \frac{\varphi^{\ast}(\eta_1\eta_2\eta_3\eta_4\eta_5)}{\varphi^{\dag}(\eta_1\eta_2\eta_3\eta_4\eta_5)} \varphi^{\ast}(\eta_2\eta_3\eta_4\eta_5) \varphi^{\ast}(\eta_1\eta_2\eta_4\eta_5)
\frac{\varphi^{\ast}(\eta_1\eta_2)}{\varphi^{\ast}(\eta_2\eta_4)} \frac{\varphi^{\ast}(\eta_2\eta_3)}{\varphi^{\ast}(\eta_2\eta_5)} \textrm{,}
\end{eqnarray*}
and 
\begin{eqnarray*}
\sum_{\boldsymbol{\eta}} \mathbf{R}(\boldsymbol{\eta},B) & \ll & B \log(B)^4 \textrm{.}
\end{eqnarray*}
\end{lemma}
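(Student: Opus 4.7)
The proof follows the same template as that of Lemma \ref{sum eta_7}, with $\eta_6$ playing the role of $\eta_7$ and $\eta_3$ playing the role of $\eta_1$. The product $\prod_{p\mid\eta_3,\,p\nmid\eta_2\eta_5\eta_6}(1-1/(p-1))$ inside $\theta_2'(\boldsymbol{\eta},\eta_6)$ vanishes unless $(\eta_3,\eta_2,\eta_5)\in\mathcal{M}$ or $2\mid\eta_6$, and in both surviving cases it equals $\varphi^{\circ}(\eta_3)/(\varphi^{\circ}(\gcd(\eta_3,\eta_2\eta_5))\varphi^{\circ}(\gcd(\eta_3,\eta_6)))$, using $\gcd(\eta_6,\eta_2\eta_5)=1$ from \eqref{gcd5}. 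Combined with the factor $\varphi^{\ast}(\eta_6)/\varphi^{\ast}(\gcd(\eta_6,\eta_3))$ in $\theta_2'$, the entire $\eta_6$-dependence collapses to
\[
\theta_2'(\boldsymbol{\eta},\eta_6)=\psi'_{\eta_3,\,\eta_1\eta_2\eta_4\eta_5}(\eta_6)\cdot\frac{\varphi^{\circ}(\eta_3)}{\varphi^{\circ}(\gcd(\eta_3,\eta_2\eta_5))},
\]
the coprimality \eqref{gcd5} being encoded in the second index.

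When $(\eta_3,\eta_2,\eta_5)\in\mathcal{M}$ I apply Lemma \ref{arithmetic preliminary} to $\eta_6\mapsto g_2(\eta_6/Y_6;\boldsymbol{\eta},B)$, whose number of sign changes on any interval is $O(1)$. When $(\eta_3,\eta_2,\eta_5)\notin\mathcal{M}$ only even $\eta_6$ contribute, and since then $2\mid\eta_3$ while $\eta_1\eta_2\eta_4\eta_5$ is odd, Lemma \ref{arithmetic preliminary 2} applies; the factor $1/2$ it produces is exactly cancelled by the factor $2$ coming from $\varphi^{\circ}(\eta_3)\varphi^{\flat}(\eta_3)=2\varphi^{\ast}(\eta_3)$ in the even case, so both parity subcases yield the same main term. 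The change of variables $t_6=\eta_6/Y_6$ converts the resulting integral of $g_2$ over $\{t_6\geq 1/Y_6\}$ into $Y_6\,g_3(\boldsymbol{\eta},B)$ via \eqref{g_3}, and the prefactor simplifies as $A_2 Y_6 Y_7/(\eta_4\eta_5)=B/\boldsymbol{\eta}^{(1,1,1,1,1)}$.

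To identify the resulting arithmetic factor with $\Theta(\boldsymbol{\eta})$, I use that in both surviving parity subcases
\[
\frac{\varphi^{\circ}(\eta_3)\varphi^{\flat}(\eta_3)}{\varphi^{\circ}(\gcd(\eta_3,\eta_2\eta_5))\varphi^{\flat}(\gcd(\eta_3,\eta_2\eta_5))}=\frac{\varphi^{\ast}(\eta_3)}{\varphi^{\ast}(\gcd(\eta_3,\eta_2\eta_5))},
\]
and combine with the expression for $\Psi'(\eta_3,\eta_1\eta_2\eta_4\eta_5)$ from Lemma \ref{arithmetic preliminary 0'}, noting that $\gcd(\eta_3,\eta_1\eta_4)=1$ (from \eqref{gcd6}) allows the replacement of $\gcd(\eta_3,\eta_1\eta_2\eta_4\eta_5)$ by $\gcd(\eta_3,\eta_2\eta_5)$. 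The final cosmetic identity is
\[
\frac{\varphi^{\ast}(\gcd(\eta_3,\eta_5))}{\varphi^{\ast}(\eta_5)}\cdot\frac{\varphi^{\ast}(\eta_3)}{\varphi^{\ast}(\gcd(\eta_3,\eta_2\eta_5))}=\frac{\varphi^{\ast}(\eta_2\eta_3)}{\varphi^{\ast}(\eta_2\eta_5)},
\]
which follows from $\gcd(\eta_2,\eta_5)=1$ (condition \eqref{gcd7}) giving $\gcd(\eta_3,\eta_2\eta_5)=\gcd(\eta_3,\eta_2)\gcd(\eta_3,\eta_5)$ together with multiplicativity of $\varphi^{\ast}$. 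Substituting the definition \eqref{theta_1'} of $\theta_1'$ and rearranging then produces exactly the stated $\Theta(\boldsymbol{\eta})$.

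For the error, the new contribution from the summation lemmas is bounded by $\sigma_{-1/4}(\eta_1\eta_2\eta_4\eta_5)Y_6^{1/4}\sup_{t_6\geq 1/Y_6}|g_2(t_6)|\ll\sigma_{-1/4}(\eta_1\eta_2\eta_4\eta_5)Y_6^{3/4}$ by Lemma \ref{bounds}; multiplying by $A_2 Y_7/(\eta_4\eta_5)$ gives $\ll B\,\sigma_{-1/4}(\eta_1\eta_2\eta_4\eta_5)/(\boldsymbol{\eta}^{(1,1,1,1,1)}Y_6^{1/4})$. Summing $\eta_4$ against $Y_7\geq 1$ costs one $\log B$, summing $\eta_5$ against $Y_6\geq 1$ absorbs the factor $Y_6^{-1/4}$ (since $\sum\eta_5^{-3/4}\ll Y_6^{1/4}$ up to the height bound), and summing the remaining $\eta_1,\eta_2,\eta_3$ each contributes one $\log B$, so the overall new error is $\ll B\log(B)^4$. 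Combined with the inherited bound $\sum R(\boldsymbol{\eta},\eta_6,B)\ll B\log(B)^4$ from Lemma \ref{sum eta_7}, this gives the claimed estimate for $\sum\mathbf{R}(\boldsymbol{\eta},B)$. The main obstacle is the algebraic bookkeeping required to collapse the various $\varphi$-type factors coming from $\theta_1'$, $\theta_2'$ and $\Psi'$ into the clean form of $\Theta(\boldsymbol{\eta})$; the rest is a faithful transcription of the argument for $\eta_7$.
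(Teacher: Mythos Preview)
Your proof is correct and follows essentially the same route as the paper's: the same parity split according to $(\eta_3,\eta_2,\eta_5)\in\mathcal{M}$ or not, the same application of Lemmas~\ref{arithmetic preliminary} and~\ref{arithmetic preliminary 2} to the $\psi'$-weighted sum over $\eta_6$, the same cancellation of the factor $1/2$ against the factor $2$ from $\varphi^{\circ}\varphi^{\flat}=2\varphi^{\ast}$ in the even case, and the same error bound via Lemma~\ref{bounds} with $\delta=1/4$. Your explicit rewriting of $\theta_2'$ as $\psi'_{\eta_3,\eta_1\eta_2\eta_4\eta_5}(\eta_6)$ times an $\eta_6$-independent factor, and your spelled-out ``cosmetic identity'' reducing $\theta_1'\cdot\Psi'$ to $\zeta(2)^{-1}\Theta$, are just more detailed versions of what the paper calls ``an easy calculation''; the only cosmetic difference in the error treatment is that you sum $\eta_4$ before $\eta_5$ whereas the paper sums $\eta_5$ first, but both orderings yield the same $B\log(B)^4$.
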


We first treat the contribution of $\mathbf{N}_1(\boldsymbol{\eta},B)$. For this, we make use of the second estimate of lemma
\ref{arithmetic preliminary} to deduce that for any fixed $0 < \delta \leq 1$, we have
\begin{eqnarray*}
\mathbf{N}_1(\boldsymbol{\eta},B) & = & \frac{A_2Y_7Y_6}{\eta_4 \eta_5} g_3 \left( \boldsymbol{\eta},B \right)
\theta_1' (\boldsymbol{\eta}) \frac{\varphi^{\circ}(\eta_3)}{\varphi^{\circ}(\gcd(\eta_3,\eta_2\eta_5))}
\Psi'(\eta_3,\eta_1\eta_2\eta_4\eta_5) \\
& & + O \left( \frac{A_2Y_7}{\eta_4 \eta_5} Y_6^{\delta} \sigma_{- \delta}(\eta_1\eta_2\eta_4\eta_5)
\sup_{t_6 Y_6 \geq 1} g_2 \left( t_6; \boldsymbol{\eta},B \right) \right) \textrm{.}
\end{eqnarray*}
To estimate the overall contribution of the error term, we use the bound of lemma \ref{bounds} for $g_2$ and we choose $\delta = 1/4$. Since the average order of $\sigma_{- 1/4}$ is $O(1)$, we obtain that this contribution is
\begin{eqnarray*}
\sum_{\boldsymbol{\eta}} \sigma_{- 1/4}(\eta_1\eta_2\eta_4\eta_5) \frac{A_2 Y_7 Y_6^{3/4}}{\eta_4 \eta_5} & \ll &
\sum_{\eta_1,\eta_2,\eta_3,\eta_4} \sigma_{- 1/4}(\eta_1\eta_2\eta_4) \frac{B}{\boldsymbol{\eta}^{(1,1,1,1,0)}} \\
& \ll & B \log(B)^4 \textrm{,}
\end{eqnarray*}
where we have summed over $\eta_5$ using the condition $Y_6 \geq 1$. Let us turn to the main term. First, note that
\begin{eqnarray*}
\frac{A_2Y_7Y_6}{\eta_4 \eta_5} & = & \frac{B}{\boldsymbol{\eta}^{(1,1,1,1,1)}} \textrm{.}
\end{eqnarray*}
In addition, we have
\begin{eqnarray*}
\Psi'(\eta_3,\eta_1\eta_2\eta_4\eta_5) & = & \varphi^{\ast}(\eta_1\eta_2\eta_4\eta_5)
\frac{\varphi^{\flat}(\eta_3)}{\varphi^{\flat}(\gcd(\eta_3,\eta_2\eta_5))}
\frac{\zeta(2)^{-1}}{\varphi^{\dag}(\eta_1\eta_2\eta_3\eta_4\eta_5)} \textrm{,}
\end{eqnarray*}
and since $(\eta_3,\eta_2,\eta_5) \in \mathcal{M}$, we also have
\begin{eqnarray*}
\frac{\varphi^{\circ}(\eta_3)}{\varphi^{\circ}(\gcd(\eta_3,\eta_2\eta_5))}
\frac{\varphi^{\flat}(\eta_3)}{\varphi^{\flat}(\gcd(\eta_3,\eta_2\eta_5))} & = &
\frac{\varphi^{\ast}(\eta_3)}{\varphi^{\ast}(\gcd(\eta_3,\eta_2\eta_5))} \textrm{.}
\end{eqnarray*}
An easy calculation finally yields
\begin{eqnarray*}
\theta_1' (\boldsymbol{\eta}) \frac{\varphi^{\circ}(\eta_3)}{\varphi^{\circ}(\gcd(\eta_3,\eta_2\eta_5))}
\Psi'(\eta_3,\eta_1\eta_2\eta_4\eta_5) & = & \zeta(2)^{-1} \Theta(\boldsymbol{\eta}) \textrm{.}
\end{eqnarray*}

We now deal with the estimation of $\mathbf{N}_2(\boldsymbol{\eta},B)$. We only need to sum on the even $\eta_6$ and so, given the coprimality condition \eqref{gcd5}, $\eta_1\eta_2\eta_4\eta_5$ is odd and moreover since
$(\eta_3,\eta_2,\eta_5) \notin \mathcal{M}$, we have $2|\eta_3$ and thus we can make use of the second estimate of lemma \ref{arithmetic preliminary 2}. The error term is the same as the previous one and, in the main term, there are exactly two differences with the case of $\mathbf{N}_1(\boldsymbol{\eta},B)$. The first is the factor $1/2$ and the second is that here, since
$(\eta_3,\eta_2,\eta_5) \notin \mathcal{N}$,
\begin{eqnarray*}
\frac{\varphi^{\circ}(\eta_3)}{\varphi^{\circ}(\gcd(\eta_3,\eta_2\eta_5))}
\frac{\varphi^{\flat}(\eta_3)}{\varphi^{\flat}(\gcd(\eta_3,\eta_2\eta_5))} & = & 2
\frac{\varphi^{\ast}(\eta_3)}{\varphi^{\ast}(\gcd(\eta_3,\eta_2\eta_5))} \textrm{,}
\end{eqnarray*}
and we finally obtain the same main term, which concludes the proof of lemma \ref{sum eta_6}.

\subsection{Conclusion}

The aim of the following lemma is to replace the conditions $t_6 Y_6 \geq 1$ and $t_7 Y_7 \geq 1$ in the integral \eqref{g_3} defining $g_3$ in the main term of $\mathbf{N}(\boldsymbol{\eta},B)$ in lemma \ref{sum eta_6} respectively by $t_6 > 0$ and $t_7 > 0$. For short, we introduce the notation
\begin{eqnarray*}
D_h & = & \left\{ (u_2, t_7, t_6) \in \mathbb{R}^3, t_6,t_7>0, h(u_2,t_7,t_6) \leq 1 \right\} \textrm{.}
\end{eqnarray*}

\begin{lemma}
For $Z_6, Z_7 > 0$, we have
\begin{eqnarray}
\label{1}
\meas \{ (u_2, t_7, t_6) \in D_h, t_6 Z_6 < 1 \} & \ll & Z_6^{-1/2} \textrm{,} \\
\label{2}
\meas \{ (u_2, t_7, t_6) \in D_h, t_7 Z_7 < 1 \} & \ll & Z_7^{-1/2} \textrm{.}
\end{eqnarray}
\end{lemma}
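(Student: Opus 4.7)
The plan is to apply Fubini with the $u_2$-integration performed first and then invoke Lemma~\ref{bounds}. Fixing $(t_6,t_7)$, the slice of $D_h$ above $(t_7,t_6)$ is by definition the set of $u_2$ with $h(u_2,t_7,t_6)\leq 1$ (which forces $0<t_6\leq 1$ and $0<t_7\leq 1$), and the one-dimensional measure of this slice is exactly $g_1(t_7,t_6)$. Therefore
\begin{eqnarray*}
\meas\{(u_2,t_7,t_6)\in D_h,\ t_6Z_6<1\} & = & \int_0^{\min(1,1/Z_6)}\int_0^1 g_1(t_7,t_6)\,\D t_7\,\D t_6\textrm{,}
\end{eqnarray*}
and symmetrically for the second statement with $t_6$ and $t_7$ exchanged.

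For \eqref{1} I would then plug in the first bound of Lemma~\ref{bounds}, namely $g_1(t_7,t_6)\ll t_6^{-1/2}t_7^{-1/2}$, and separate the two integrals:
\begin{eqnarray*}
\meas\{(u_2,t_7,t_6)\in D_h,\ t_6Z_6<1\} & \ll & \left(\int_0^{\min(1,1/Z_6)}t_6^{-1/2}\,\D t_6\right)\left(\int_0^1 t_7^{-1/2}\,\D t_7\right)\\
& \ll & \min(1,1/Z_6)^{1/2}\textrm{.}
\end{eqnarray*}
When $Z_6\geq 1$ this gives $Z_6^{-1/2}$ directly, and when $Z_6<1$ the bound $Z_6^{-1/2}$ exceeds $1$ so the inequality is trivially satisfied (the total measure of $D_h$ being bounded, as can already be read from the same computation with $Z_6=1$). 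Bound \eqref{2} proceeds identically: the estimate $g_1(t_7,t_6)\ll t_6^{-1/2}t_7^{-1/2}$ from Lemma~\ref{bounds} is symmetric in $(t_6,t_7)$, so swapping the roles of the two variables yields the factor $\min(1,1/Z_7)^{1/2}\ll Z_7^{-1/2}$.

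There is no serious obstacle here; the only thing to notice is that the pointwise bound on $g_1$ is integrable in both variables near $0$, which is what makes the singular slab of thickness $Z_6^{-1}$ (resp.\ $Z_7^{-1}$) contribute only its square root. The content of the lemma is really contained in the bound of Lemma~\ref{bounds}, which itself comes from the quadratic constraint $|t_7-t_6u_2||t_6+t_7u_2|\leq 1$ pinning $u_2$ to a set of measure $\ll t_6^{-1/2}t_7^{-1/2}$.
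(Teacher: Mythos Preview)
Your proof is correct and follows exactly the approach sketched in the paper: the paper's one-line proof simply invokes the bound $g_1(t_7,t_6)\ll t_6^{-1/2}t_7^{-1/2}$ from Lemma~\ref{bounds} together with the fact that $h\leq 1$ forces $t_6,t_7\leq 1$, and you have spelled out the Fubini step and the resulting integrals explicitly.
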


\begin{proof}
These two bounds follow from the bound of lemma \ref{bounds} for $g_1$ and the fact that $h(u_2,t_7,t_6) \leq 1$ implies
$t_6,t_7 \leq 1$.
\end{proof}

Making use of the bound \eqref{1}, we see that replacing the condition $t_6 Y_6 \geq 1$ in the integral defining $g_3$ in the main term of $\mathbf{N}(\boldsymbol{\eta},B)$ in lemma \ref{sum eta_6} by the condition $t_6 > 0$ creates an error term whose overall contribution is
\begin{eqnarray*}
\sum_{\boldsymbol{\eta}}  \frac{A_2 Y_7 Y_6^{1/2}}{\eta_4 \eta_5} & \ll &
\sum_{\eta_1,\eta_2,\eta_3,\eta_4} \frac{B}{\boldsymbol{\eta}^{(1,1,1,1,0)}} \\
& \ll & B \log(B)^4 \textrm{,}
\end{eqnarray*}
where we have summed over $\eta_5$ using the condition $Y_6 \geq 1$. The bound \eqref{2} shows that the same conclusion holds for the condition
$t_7 Y_7 \geq 1$. Recalling the equality \eqref{omega}, we finally see that we can replace $g_3(\boldsymbol{\eta},B)$ in the main term of $\mathbf{N}(\boldsymbol{\eta},B)$ in lemma \ref{sum eta_6} by
\begin{eqnarray*}
\int \int \int_{t_6,t_7>0, h(u_2,t_7,t_6) \leq 1} \D u_2 \D t_7 \D t_6 & = & \frac{\omega_{\infty}}{4} \textrm{.}
\end{eqnarray*}
Redefine $\Theta$ as being equal to zero if the remaining coprimality conditions \eqref{gcd6} and \eqref{gcd7} are not satisfied. Using lemma \ref{T}, we obtain the following result.

\begin{lemma}
\label{final lemma}
We have the estimate
\begin{eqnarray*}
N_{U,H}(B) & = & \zeta(2)^{-1} \frac{\omega_{\infty}}{2} B \sum_{\boldsymbol{\eta} \in \mathcal{V}}
\frac{\Theta(\boldsymbol{\eta})}{\boldsymbol{\eta}^{(1,1,1,1,1)}} + O \left( B \log(B)^4 \right) \textrm{,}
\end{eqnarray*}
where $\mathcal{V}$ is defined in \eqref{V}.
\end{lemma}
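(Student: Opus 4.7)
The approach is simply to assemble the main-term estimates already obtained in the three preceding lemmas and convert the truncated integral $g_3(\boldsymbol{\eta},B)$ into the Archimedean density. Starting from Lemma \ref{T}, write $\#\mathcal{T}(B) = \sum_{\boldsymbol{\eta}'} N(\boldsymbol{\eta}',B)$, where the sum ranges over $\boldsymbol{\eta}' \in \mathbb{Z}_{>0}^7$ subject to \eqref{condition1}, \eqref{condition2} and the coprimality conditions \eqref{gcd4}--\eqref{gcd7}. The idea is to plug in the asymptotic for $N(\boldsymbol{\eta}',B)$ from Lemma \ref{lemma inter}, then iteratively sum the main term over $\eta_7$ using Lemma \ref{sum eta_7}, over $\eta_6$ using Lemma \ref{sum eta_6}, and finally turn the resulting bounded integral $g_3(\boldsymbol{\eta},B)$ into $\omega_{\infty}/4$ using the measure bounds \eqref{1} and \eqref{2}.

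In detail, the first step is to sum the error term of Lemma \ref{lemma inter} over $\boldsymbol{\eta}'$; since that sum is already controlled by $B\log(B)^2$, it is absorbed into the claimed error $O(B\log(B)^4)$. Next, the main term of Lemma \ref{lemma inter} is summed over $\eta_7$: Lemma \ref{sum eta_7} converts it into $\frac{A_2 Y_7}{\eta_4\eta_5} g_2(\eta_6/Y_6;\boldsymbol{\eta},B)\theta_1'(\boldsymbol{\eta})\theta_2'(\boldsymbol{\eta},\eta_6)$ with an error that sums to $\ll B\log(B)^4$. Summing the resulting main term over $\eta_6$ via Lemma \ref{sum eta_6} produces $\zeta(2)^{-1} B \boldsymbol{\eta}^{-(1,1,1,1,1)} g_3(\boldsymbol{\eta},B)\Theta(\boldsymbol{\eta})$, again with aggregate error $\ll B\log(B)^4$. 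At this stage $\boldsymbol{\eta}$ is still constrained to $\mathcal{V}$ via the conditions $Y_6\geq 1$ and $Y_7\geq 1$ built into the earlier summations, and to the coprimalities \eqref{gcd6}, \eqref{gcd7}; redefining $\Theta$ to vanish when these coprimalities fail allows us to drop them from the range of summation without altering anything.

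The remaining point is to pass from the truncated integral $g_3(\boldsymbol{\eta},B)$ of \eqref{g_3} to the unrestricted integral over $\{(u_2,t_7,t_6) \in D_h\}$, which equals $\omega_{\infty}/4$ by \eqref{omega}. Here the bounds \eqref{1} and \eqref{2} control the measure of the excised slivers $\{t_6 Y_6 < 1\}$ and $\{t_7 Y_7 < 1\}$ by $Y_6^{-1/2}$ and $Y_7^{-1/2}$ respectively; multiplying by the prefactor $B/\boldsymbol{\eta}^{(1,1,1,1,1)} = A_2 Y_6 Y_7/(\eta_4\eta_5)$ and summing first over $\eta_5$ (respectively $\eta_4$) using $Y_6 \geq 1$ (resp.\ $Y_7 \geq 1$) yields an acceptable $\ll B\log(B)^4$ contribution, exactly as in the treatment of the error terms in Lemmas \ref{sum eta_7} and \ref{sum eta_6}. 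Combining and multiplying by the overall factor $2$ coming from Lemma \ref{T} converts $\omega_{\infty}/4$ into $\omega_{\infty}/2$, giving the asserted estimate.

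The main obstacle is purely bookkeeping: one must check that every error term along the way---those from Lemmas \ref{lemma inter}, \ref{sum eta_7}, \ref{sum eta_6}, and the truncation replacement---really does sum to $\ll B\log(B)^4$, which relies on the systematic trick of summing the leftover variable against one of the height conditions to pick up the missing logarithm. Nothing new needs to be proved; the proof is a straightforward chain of substitutions, and the Archimedean density enters through the identity $\omega_{\infty} = 4\iiint_{D_h} \D u_2\,\D t_7\,\D t_6$ recorded in \eqref{omega}.
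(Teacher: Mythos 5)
Your proposal is correct and follows essentially the same route as the paper: sum the error terms of Lemmas \ref{lemma inter}, \ref{sum eta_7} and \ref{sum eta_6}, replace the truncated integral $g_3(\boldsymbol{\eta},B)$ by $\omega_{\infty}/4$ using the measure bounds \eqref{1} and \eqref{2} (summing over $\eta_5$, resp.\ $\eta_4$, against $Y_6 \geq 1$, resp.\ $Y_7 \geq 1$), absorb the coprimality conditions \eqref{gcd6}, \eqref{gcd7} into the redefinition of $\Theta$, and multiply by the factor $2$ from Lemma \ref{T}.
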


Let us introduce the generalized Möbius function $\boldsymbol{\mu}$ defined for $(n_1, \dots, n_5) \in \mathbb{Z}_{>0}^5$ by
$\boldsymbol{\mu}(n_1, \dots, n_5) = \mu(n_1) \cdots \mu(n_5)$. We set $\mathbf{k} = (k_1,k_2,k_3,k_4,k_5)$ and we define, for
$s \in \mathbb{C}$ such that $\Re(s) > 1$,
\begin{eqnarray*}
F(s) & = & \sum_{\boldsymbol{\eta} \in \mathbb{Z}_{>0}^5}
\frac{\left|(\Theta \ast \boldsymbol{\mu})(\boldsymbol{\eta})\right|}{\eta_1^s \eta_2^s \eta_3^s \eta_4^s \eta_5^s} \\
& = & \prod_p \left( \sum_{\mathbf{k} \in \mathbb{Z}_{\geq 0}^5}
\frac{\left|(\Theta \ast \boldsymbol{\mu}) \left( p^{k_1},p^{k_2},p^{k_3},p^{k_4},p^{k_5} \right)\right|}
{p^{k_1 s}p^{k_2 s}p^{k_3 s}p^{k_4 s}p^{k_5 s}} \right) \textrm{.}
\end{eqnarray*}
It is easy to see that if $\mathbf{k} \notin \{0,1\}^5$ then
$(\Theta \ast \boldsymbol{\mu}) \left( p^{k_1},p^{k_2},p^{k_3},p^{k_4},p^{k_5} \right) = 0$ and moreover if exactly one of the $k_i$ is equal to $1$, then $(\Theta \ast \boldsymbol{\mu}) \left( p^{k_1},p^{k_2},p^{k_3},p^{k_4},p^{k_5} \right) \ll 1/p$, so the local factors $F_p$ of $F$ satisfy
\begin{eqnarray*}
F_p(s) & = & 1 + O \left( \frac1{p^{ \min \left( \Re(s)+1, 2 \Re(s) \right)}} \right) \textrm{.}
\end{eqnarray*}
This proves that $F$ actually converges in the half-plane $\Re(s) > 1/2$, which implies that $\Theta$ satifies the assumption of
\cite[Lemma $8$]{3A1}. Applying this lemma, we get
\begin{eqnarray}
\label{sum1}
\ \ \ \ \ \sum_{\boldsymbol{\eta} \in \mathcal{V}} \frac{\Theta(\boldsymbol{\eta})}{\boldsymbol{\eta}^{(1,1,1,1,1)}} & = & \alpha
\left( \sum_{\boldsymbol{\eta} \in \mathbb{Z}_{>0}^5} \frac{(\Theta \ast \boldsymbol{\mu})(\boldsymbol{\eta})}{\boldsymbol{\eta}^{(1,1,1,1,1)}} \right) \log(B)^5 + O \left( \log(B)^4 \right) \textrm{,}
\end{eqnarray}
where $\alpha$ is the volume of the polytope defined in $\mathbb{R}^5$ by $t_1,t_2,t_3,t_4,t_5 \geq 0$ and
\begin{eqnarray*}
t_1 + 2 t_2 + 3 t_3 + 2 t_5 & \leq & 1 \textrm{,} \\
3 t_1 + 2 t_2 + t_3 + 2 t_4 & \leq & 1 \textrm{.}
\end{eqnarray*}
A computation using Franz's additional \textit{Maple} package \cite{Convex} provides $\alpha = 1/2160$, that is to say
\begin{eqnarray}
\label{alpha}
\alpha & = & 2 \alpha(\widetilde{V}) \textrm{.}
\end{eqnarray}
Moreover,
\begin{eqnarray*}
\sum_{\boldsymbol{\eta} \in \mathbb{Z}_{>0}^5} \frac{(\Theta \ast \boldsymbol{\mu}) (\boldsymbol{\eta})}{\boldsymbol{\eta}^{(1,1,1,1,1)}} & = & \prod_p \left( \sum_{\mathbf{k} \in \mathbb{Z}_{\geq 0}^5}
\frac{(\Theta \ast \boldsymbol{\mu}) \left( p^{k_1},p^{k_2},p^{k_3},p^{k_4},p^{k_5} \right)}{p^{k_1}p^{k_2}p^{k_3}p^{k_4}p^{k_5}} \right) \\
& = & \prod_p \left( 1 - \frac1{p} \right)^5 \left( \sum_{\mathbf{k} \in \mathbb{Z}_{\geq 0}^5} 
\frac{\Theta \left( p^{k_1},p^{k_2},p^{k_3},p^{k_4},p^{k_5} \right)}{p^{k_1}p^{k_2}p^{k_3}p^{k_4}p^{k_5}} \right) \textrm{.}
\end{eqnarray*}
The remaining coprimality conditions greatly simplify the calculation and we obtain
\begin{eqnarray*}
\sum_{\mathbf{k} \in \mathbb{Z}_{\geq 0}^5}
\frac{\Theta \left( p^{k_1},p^{k_2},p^{k_3},p^{k_4},p^{k_5} \right)}{p^{k_1}p^{k_2}p^{k_3}p^{k_4}p^{k_5}} & = &
\left( 1 - \frac1{p^2} \right)^{-1} \left( 1 - \frac1{p} \right) \left( 1 + \frac{6}{p} + \frac1{p^2} \right) \textrm{,}
\end{eqnarray*}
which gives
\begin{eqnarray}
\label{sum2}
\sum_{\boldsymbol{\eta} \in \mathbb{Z}_{>0}^5} \frac{(\Theta \ast \boldsymbol{\mu}) (\boldsymbol{\eta})}{\boldsymbol{\eta}^{(1,1,1,1,1)}} & = & \zeta(2) \prod_p \left( 1 - \frac1{p} \right)^6 \omega_p \textrm{.}
\end{eqnarray}
We complete the proof of theorem \ref{Manin} putting together the equalities \eqref{sum1}, \eqref{alpha}, \eqref{sum2} and lemma \ref{final lemma}.

\bibliographystyle{is-alpha}
\bibliography{biblio}

\end{document}